\documentclass[12pt,reqno]{amsart}
\usepackage{latexsym,amsmath,amssymb,mathrsfs,setspace,pstricks,amsthm}

\newtheorem{theo}{Theorem}
\newtheorem{lemma}[theo]{Lemma}
\newtheorem{corollary}[theo]{Corollary}

\theoremstyle{remark}

\begin{document}
\doublespace

\title{ Units in $FD_{2p^m}$}

\author{ Kuldeep Kaur, Manju Khan}
\email{  kuldeepk@iitrpr.ac.in, manju@iitrpr.ac.in}

\address{Department of Mathematics, Indian Institute of Technology
Ropar,Nangal Road, Rupnagar - 140 001 }

\footnotetext{}
\keywords{Unitary units ; Unit Group; Group algebra.}

\subjclass[2000]{16U60, 20C05}

\maketitle

\markboth{ Kuldeep Kaur, Manju Khan}{Units in $FD_{2p^m}$}

\doublespacing

\begin{abstract}
In this note, we compute the order and provide the structure of the unit group
$\mathcal{U}(FD_{2p^m})$ of the group algebra $FD_{2p^m}$, where $F$ is a finite
field of characteristic $2$ and $D_{2p^m}$ is the dihedral group of order
$2p^m$ such that $p$ is an odd prime. Further, we obtain the structure of the
unitary subgroup
$\mathcal{U}_*(FD_{2p^m})$ with respect to canonical involution $*$ and prove
that it is a normal subgroup of the unit group $\mathcal{U}(FD_{2p^m})$.
\end{abstract}
\section*{introduction}
 \noindent Let $FG$ be the group algebra of the group $G$ over the field $F$ and
 $\mathcal{U}(FG)$ its unit group. Extending the anti-automorphism $g
\mapsto
 g^{-1}$ of $G$ to the group algebra, we obtain the
involution \[x = 
 \sum_{g \in G} x_g g \mapsto x^* = \sum_{g \in G} x_g g^{-1}\] of $FG$, which
we
call canonical involution and is denoted by $*$. An element $x \in
\mathcal{U}(FG)$ is called unitary if $x^* = x^{-1}$. Let $\mathcal{U}_*(FG)$ be
the unitary subgroup consisting of all unitary elements of $\mathcal{U}(FG)$. If
$ \displaystyle V(FG) = \{ \sum_{g \in G}a_gg \in \mathcal{U}(FG) \ | \
\sum_{g\in G} a_g = 1\}$ is the normalized unit group of the group algebra $FG$,
then $\mathcal{U}(FG) = V(FG) \times F^*$, where $F^*$ is the set of all non
zero elements of $F$. In particular, if $F$ is a finite field of characteristic
2, then the unitary subgroup $\mathcal{U}_*(FG)$ is same as the unitary subgroup
$V_*(FG)$. 
Interest in the group $\mathcal{U}_*(FG)$ arose in algebraic topology and
unitary $K$-theory \cite{MR0292913}.

 Sandling \cite{MR761637} computed a basis for the normalized unit group
$V(FG)$, if $G$ is a finite abelian $p$-group and $F$ is a finite field with $p$
elements.
 In \cite{MR1136226} Sandling provided the generators and relators for each
2-group of order dividing 16 over finite field with 2 elements.  Creedon and
Gildea in
\cite{MR2884238} obtained the structure of $V(FD_8)$, where $D_8$
is a dihedral group of order 8 and $F$ is a finite field of characteristic 2. In
\cite{KM}, Kuldeep and Manju described the structure
of the
unit group
$\mathcal{U}(F_2D_{2p})$, where $F_2$ is the finite field with two elements and
$D_{2p}$ is the dihedral group of order $2p$ for an odd prime $p$. They also
computed the order and provided the structure of the unitary subgroup
$\mathcal{U}_*(F_2D_{2p})$ with respect to canonical involution $*$.

Our goal here is to study the structure of the unit group
$\mathcal{U}(FD_{2p^m})$ of the group algebra $FD_{2p^m}$ of the dihedral
group $D_{2p^m}$ of order $2p^m$, where $p$ is an odd prime over a finite field
$F$
of characteristic 2. We also provide the structure of the unitary subgroup
$\mathcal{U}_*(FD_{2p^m})$. 

Throughout the paper, we assume that $F$ is a field with $2^n$ elements. 
The polynomial $x^{p^m}-1$ can be written as
\[x^{p^m} - 1 = (x - 1)\prod_{s = 1}^m \phi_{p^s}(x),\] where $\phi_{p^s}(x)$ is
the $p^s$-th cyclotomic polynomial over $F$. If $o_s$
denotes the order of $2^n$ modulo $p^s$, i.e., $o_s = o_{p^s}(2^n)$ for $1 \leq
s \leq m$, 
and $k_s$ denotes the number of 
irreducible factors of $\phi_{p^s}(x)$ over $F$, then $k_s=
\frac{\phi(p^s)}{o_s}$, where
$\phi$ denotes the Euler $\phi$- function. If $\alpha$ is a primitive
$p^s$-th root of unity, then the polynomail $(x - \alpha)(x -
\alpha^{2^n} )\cdots (x - \alpha^{(2^n)^{o_s -1 }})$ is a minimal polynomial of
$\alpha$ over $F$ and therefore, it is an irreducible factor of $\phi_{p^s}(x)$
over $F$. 
\section*{Unit Group of $FD_{2p^m}$}
In this section, we provide the order of the unit group
$\mathcal{U}(FD_{2p^m})$.
Assume that $o_p(2^n) = d$. Clearly, $p$
and $d$ are relatively prime and $o_{p^2}(2^n)$ is either $d$ or $pd$.  
In general 
\begin{lemma}
For an integer $i \geq 1$,  if $o_{p^i}(2^n)$ is $d$, then $o_{p^{i+1}}(2^n)$ is
either $d$ or $pd$. 
\end{lemma}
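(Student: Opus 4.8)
The plan is to treat this as an elementary statement about the multiplicative order of $2^n$ modulo successive powers of $p$, and to pin down $o_{p^{i+1}}(2^n)$ by establishing two divisibility relations. Write $q = 2^n$ and set $e_i = o_{p^i}(q)$, so that by hypothesis $e_i = d$. The key point is that the two orders are linked by the double divisibility $e_i \mid e_{i+1}$ and $e_{i+1} \mid p\,e_i$. Once both are in hand the conclusion is immediate: the only divisors of $p\,e_i$ that are themselves multiples of $e_i$ are $e_i$ and $p\,e_i$ (since $p$ is prime, any such divisor has the form $e_i k$ with $k \mid p$, forcing $k \in \{1,p\}$), and hence $o_{p^{i+1}}(q) \in \{d, pd\}$.

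The upward divisibility $e_i \mid e_{i+1}$ is the easy direction: if $q^{e_{i+1}} \equiv 1 \pmod{p^{i+1}}$, then reducing modulo $p^i$ gives $q^{e_{i+1}} \equiv 1 \pmod{p^i}$, so the order of $q$ modulo $p^i$, namely $e_i$, divides $e_{i+1}$. The relation $e_{i+1} \mid p\,e_i$ is where the real work lies. Since $q^{e_i} \equiv 1 \pmod{p^i}$, I would write $q^{e_i} = 1 + a p^i$ for some integer $a$ and raise to the $p$-th power, expanding by the binomial theorem:
\[
q^{p e_i} = (1 + a p^i)^p = 1 + a p^{i+1} + \sum_{j=2}^{p} \binom{p}{j} a^j p^{ij}.
\]
For $i \geq 1$ every term in the sum has $p$-adic valuation at least $i+1$, because $ij \geq 2i \geq i+1$ for $j \geq 2$; consequently $q^{p e_i} \equiv 1 \pmod{p^{i+1}}$, which forces $e_{i+1} \mid p\,e_i$.

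The main obstacle is the valuation bookkeeping in this binomial step, and it is worth noting where the hypotheses enter: the argument needs $ij \ge i+1$ for all $j \ge 2$, which holds precisely because $i \ge 1$, and the primality of $p$ is what restricts the divisors of $p\,e_i$ lying above $e_i$ to the two stated values. Combining the two divisibilities then yields $o_{p^{i+1}}(2^n) \in \{d, pd\}$, completing the proof.
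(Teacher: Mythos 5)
Your proof is correct and follows essentially the same route as the paper: establish $d \mid t$ by reduction modulo $p^i$ and $t \mid pd$ by showing $(2^n)^{pd} \equiv 1 \pmod{p^{i+1}}$, then conclude $t \in \{d, pd\}$. The only difference is that you spell out the binomial-expansion valuation argument that the paper leaves implicit.
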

\begin{proof}
Suppose that $o_{p^{i+ 1}}(2^n)$ is $t$. Then, $d |t$.
Further, since $(2^n)^d = 1 \ mod \ p^i$, it follows that  $(2^n)^{dp} = 1 \mod 
\ p^{i+ 1}$.
Therefore, $t|pd$. Hence, either $t = d$ or $t = pd$. 
\end{proof}

\begin{lemma}
 Assume that $o_{p^i}(2^n)$ is $d$. If $o_{p^{i+1}}(2^n)$ is $pd$, then
$o_{p^{k}}(2^n)$ is
$p^{k-i}d$ for all $k \geq i + 2$. 
\end{lemma}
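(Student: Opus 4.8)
The plan is to translate the statement about multiplicative orders into a statement about the $p$-adic valuation of $(2^n)^d-1$ and then run a lifting-the-exponent induction. Write $q = 2^n$ and let $\nu(x)$ denote the exponent of $p$ in the integer $x$. The hypothesis $o_{p^i}(q) = d$ says precisely that $p^i \mid q^d - 1$, while $o_{p^{i+1}}(q) = pd$ forces $o_{p^{i+1}}(q) \neq d$, hence $p^{i+1} \nmid q^d - 1$. Combining these, $\nu(q^d - 1) = i$, so I may write $q^d = 1 + p^i u$ with $p \nmid u$; note also $i \geq 1$.

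The core of the argument is the claim that $\nu(q^{d p^j} - 1) = i + j$ for every $j \geq 0$, which I would prove by induction on $j$. The base case $j = 0$ is the identity just established. For the step, assuming $q^{d p^j} = 1 + p^{i+j} w$ with $p \nmid w$, I expand
\[ q^{d p^{j+1}} = (1 + p^{i+j} w)^p = 1 + p^{i+j+1} w + \sum_{\ell = 2}^{p} \binom{p}{\ell} p^{\ell(i+j)} w^\ell \]
by the binomial theorem. The linear term has valuation exactly $i+j+1$ since $p \nmid w$. For $2 \leq \ell \leq p-1$ the coefficient $\binom{p}{\ell}$ is divisible by $p$, so the $\ell$-th term has valuation at least $1 + \ell(i+j) \geq 1 + 2(i+j) \geq i+j+2$, using $i+j \geq 1$; the final term $p^{p(i+j)} w^p$ has valuation $p(i+j) \geq i+j+2$ since $(p-1)(i+j) \geq 2$. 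Thus every nonlinear term has valuation strictly larger than the linear one, whence $\nu(q^{d p^{j+1}} - 1) = i+j+1$, closing the induction.

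With the valuation formula in hand the order computation is routine. Set $g = q^d = 1 + p^i u$. Since $g \equiv 1 \pmod p$, the order of $g$ modulo $p^k$ is a power of $p$, say $p^b$; applying the claim with $g^{p^b} = q^{d p^b}$ gives $\nu(g^{p^b} - 1) = i + b$, so $g^{p^b} \equiv 1 \pmod{p^k}$ iff $b \geq k - i$, i.e.\ $g$ has order $p^{k-i}$ modulo $p^k$. Now for any $m$ with $q^m \equiv 1 \pmod{p^k}$ we have $q^m \equiv 1 \pmod p$, forcing $d \mid m$; writing $m = ds$, the congruence becomes $g^s \equiv 1 \pmod{p^k}$, that is $p^{k-i} \mid s$. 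The least such $m$ is therefore $d\,p^{k-i}$, proving $o_{p^k}(q) = p^{k-i} d$ for every $k \geq i$, and in particular for all $k \geq i+2$.

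The step I expect to be the main obstacle is the exact valuation bookkeeping in the inductive step: one must show the valuation grows by \emph{exactly} one at each lifting rather than merely by at least one, and this is precisely where the oddness of $p$ enters, through $\nu\!\left(\binom{p}{2}\right) = 1$, which pushes the quadratic term to valuation $1 + 2(i+j) \geq i+j+2$. For $p = 2$ this fails, since the quadratic term then has valuation $2(i+j)$ and can tie with or beat the linear term $i+j+1$; so the hypothesis that $p$ is odd must be used carefully here, while the remaining deductions are formal.
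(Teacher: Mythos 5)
Your proof is correct and follows essentially the same route as the paper: both hinge on writing $(2^n)^d = 1 + sp^i$ with $p \nmid s$ and showing via the binomial expansion (using that $p$ is odd, so $p \mid \binom{p}{2}$) that the $p$-adic valuation of $(2^n)^{dp^j}-1$ grows by exactly one at each lifting; the paper carries out only the step from $p^{i+1}$ to $p^{i+2}$ and dismisses the rest ``in a similar way,'' whereas you complete the induction explicitly. One microscopic slip: to force $d \mid m$ from $q^m \equiv 1 \pmod{p^k}$ you should reduce modulo $p^i$ (where the hypothesis $o_{p^i}(q)=d$ applies) rather than modulo $p$, since $o_p(q)$ is only known to divide $d$.
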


 \begin{proof}
  Let $o_{p^{i+2}}(2^n)$ be $t$. Then, it is clear that $pd|
t$ and
$d|t$. Further, since $(2^n)^d \equiv 1 \ \mod \ p^i$, there is an integer $s$
such that $(2^n)^d = 1 + sp^i$ and so $ (2^n)^{dp} \equiv  1 + sp^{i+ 1} \ \mod
\ p^{i + 2}$.  
As $o_{p^{i+1}}(2^n)$ is $pd$, it follows that $(p, s) = 1$
and so
$(2^n)^{dp} \not \equiv 1 \mod \ p^{i + 2} $. 
Also $(2^n)^{pd} \equiv 1$ modulo $p^{i+1}$, which entails that $(2^n)^{p^2d}
\equiv 1$ modulo $p^{i+2}$. Therefore $t|p^2d$. Hence $o_{p^{i+2}}(2^n)$ is
$p^2d$. In a similar way one can prove for any integer $k \geq (i + 3)$.
\end{proof}

\begin{lemma}
Let $o_{p^k}(2^n)$ be $d$, for $1
\leq k \leq i$ and $o_{p^{i + 1}}(2^n)$ be $pd$. If $\zeta$ is a
primitive $p^m$-th root of unity, then $\zeta^j$ and
$\zeta^{-j}$ are roots of the same irreducible polynomial over $F$ if and
only if
$2|d$.
\end{lemma}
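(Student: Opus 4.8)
The plan is to reduce the statement about irreducible factors to a purely number-theoretic condition on the order of $2^n$, and then feed in the hypothesis together with the preceding lemma. First I would recall the fact stated in the introduction: over $F = \mathbb{F}_{2^n}$ the roots of the minimal polynomial of $\zeta^j$ are exactly its Frobenius conjugates $\zeta^j,\ \zeta^{j2^n},\ \zeta^{j(2^n)^2},\dots$. Hence $\zeta^j$ and $\zeta^{-j}$ are roots of the same irreducible factor precisely when $\zeta^{-j} = (\zeta^j)^{(2^n)^\ell}$ for some $\ell \geq 0$. Writing $p^r$ for the order of $\zeta^j$ (so $p^r = p^m/\gcd(j,p^m)$, and we may assume $\zeta^j \neq 1$, i.e. $r \geq 1$), this is equivalent to the solvability of the congruence $(2^n)^\ell \equiv -1 \pmod{p^r}$.

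Next I would interpret this group-theoretically inside $(\mathbb{Z}/p^r\mathbb{Z})^*$. Since $p$ is an odd prime, this group is cyclic, so it contains a unique element of order $2$, namely $-1$, and a unique subgroup of each order dividing $\phi(p^r)$. The cyclic subgroup $\langle 2^n\rangle$ has order $o_{p^r}(2^n)$, and in a cyclic group the ambient involution lies in a subgroup if and only if that subgroup has even order. Therefore $(2^n)^\ell \equiv -1 \pmod{p^r}$ is solvable if and only if $o_{p^r}(2^n)$ is even.

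Finally I would pin down the parity of $o_{p^r}(2^n)$ using the hypothesis and the preceding lemma: under the stated assumptions, $o_{p^k}(2^n) = d$ for $1 \leq k \leq i$ and $o_{p^k}(2^n) = p^{\,k-i}d$ for $k \geq i+1$. Because $p$ is odd, the factor $p^{\,k-i}$ never changes the $2$-adic valuation, so $o_{p^r}(2^n)$ is even if and only if $d$ is even, for every $r \geq 1$. Chaining the three equivalences yields the claim. I expect the main obstacle to be the middle step—making the cyclic-group argument watertight and, crucially, noting that its conclusion is uniform in $j$, that is, independent of the order $p^r$ of $\zeta^j$. It is precisely the preceding lemma that guarantees this uniformity, by fixing the parity of $o_{p^r}(2^n)$ across all $r$.
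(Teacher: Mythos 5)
Your proof is correct and takes essentially the same route as the paper's: both reduce the question to whether $-1$ is a power of $2^n$ modulo $p^r$ (where $p^r$ is the order of $\zeta^j$), decide this by the parity of $o_{p^r}(2^n)$ using that $-1$ is the unique element of order $2$ in the cyclic group $(\mathbb{Z}/p^r\mathbb{Z})^*$, and then invoke the preceding lemma to conclude that this parity equals the parity of $d$ uniformly in $r$. Your packaging of the two directions into a single chain of equivalences via the unique-subgroup property of cyclic groups is a mild streamlining of the paper's argument, not a different method.
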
 
\begin{proof}
First, assume that $\zeta^j$ and $\zeta^{-j}$ are roots of the same
irreducible polynomial over $F$. If $j=kp^r, (k,p)=1$, then it is clear that
$\zeta^{j}$ is primitive $p^{m-r}$-th root of unity. Therefore, $\zeta^j$ and
$\zeta^{-j}$ are roots of
$\phi_{p^{m-r}}(x)$. Hence,
$-1 \equiv 1(2^n)^t \mod p^{m-r} $, where $1 \leq t \leq o_{m-r}-1$.
  Thus $o_{p^{m - r}}(2^{nt}) $ is 2. Further, since $(2^n)^{o_{m-r}} \equiv 1
\mod p^{m-r}$,
it follows that $2|o_{m-r}$. From lemma (2), we get
 \[o_{m-r}= \begin{cases}
 d , & \textrm{if } m-r \leq i\\
 p^{m-r-i}d, & \textrm{if } m-r > i 
 \end{cases}\]
 As $(2, p)=1$, we obtain that $d$ is even.  

To prove the converse part, it is enough to show that if $d$ is even then there
exists an integer $t$ such that $-1 \equiv (2^n)^t \ mod \ p^{m- r}$ for any
$j=kp^r, (k,p)=1$. For
that,
if $d=2t$, then $(2^n)^{2t}=1 \mod p^{m-r}$ or
$(2^n)^{p^{m-r-i}2t}=1 \mod p^{m-r}$. Since $-1$ is the only element of order
2 in $\mathbb{Z}_{p^{m-r}}$, it follows that $(2^n)^t \equiv -1 \mod p^{m-r}$ or
$(2^n)^{p^{m-r-i}t} \equiv -1 \mod p^{m-r}$.
\end{proof}

\begin{lemma}
 Let $o_{p^k}(2^n)=d$ for $1 \leq k \leq i$ and $o_{p^{i+1}}(2^n)=pd$. If d is
even, then for any $j, [F(\zeta^{p^j}+\zeta^{-p^j}):F]=
\frac{o_{m-j}}{2}$, and if d is odd, then
$[F(\zeta^{p^j}+\zeta^{-p^j}):F]=
o_{m-j}$ and in this case $F(\zeta^{p^j}+\zeta^{-p^j})=F(\zeta^{p^j})$.
\end{lemma}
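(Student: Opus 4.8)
The plan is to set $\beta = \zeta^{p^j}$, a primitive $p^{m-j}$-th root of unity, write $\eta = \beta + \beta^{-1}$, and study the tower $F \subseteq F(\eta) \subseteq F(\beta)$. Since $\beta$ is a root of the quadratic $x^2 - \eta x + 1$ whose coefficients lie in $F(\eta)$, the outer degree $[F(\beta):F(\eta)]$ is at most $2$, so the entire problem reduces to deciding whether this degree is $1$ or $2$. Because $\beta$ is a primitive $p^{m-j}$-th root of unity, its minimal polynomial over $F$ has degree $o_{m-j} = o_{p^{m-j}}(2^n)$, so $[F(\beta):F] = o_{m-j}$; once the outer degree is pinned down, the claimed value of $[F(\eta):F]$ follows by multiplicativity of degrees, and the assertion $F(\eta) = F(\beta)$ in the odd case is just the statement that the outer degree equals $1$.

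First I would identify $\mathrm{Gal}(F(\beta)/F)$ with the cyclic group generated by the Frobenius $\sigma \colon x \mapsto x^{2^n}$, of order $o_{m-j}$, acting by $\sigma^t(\beta) = \beta^{(2^n)^t}$. An element $\sigma^t$ fixes $\eta$ exactly when $\beta^{(2^n)^t} + \beta^{-(2^n)^t} = \beta + \beta^{-1}$, and since $\beta$ has order $p^{m-j}$ this forces $(2^n)^t \equiv \pm 1 \pmod{p^{m-j}}$. The $+1$ case yields only the identity, while the $-1$ case yields a potential nontrivial automorphism $\beta \mapsto \beta^{-1}$. By Galois theory the fixer of $\eta$ is $\mathrm{Gal}(F(\beta)/F(\eta))$, so $[F(\beta):F(\eta)]$ is precisely the order of this fixer.

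Next I would invoke the preceding lemma (applied with $k = 1$, $r = j$), which states that $\beta = \zeta^{p^j}$ and $\beta^{-1} = \zeta^{-p^j}$ are roots of the same irreducible polynomial over $F$ if and only if $d$ is even; equivalently, a $t$ with $(2^n)^t \equiv -1 \pmod{p^{m-j}}$ exists exactly when $d$ is even. When $d$ is odd no such $t$ exists, the only $\sigma^t$ fixing $\eta$ is the identity, and hence $F(\eta) = F(\beta)$ with $[F(\eta):F] = o_{m-j}$. When $d$ is even, $o_{m-j}$ is even as well, since by Lemma 2 it equals $d$ or $p^{m-j-i}d$ and $p$ is odd; the cyclic subgroup $\langle 2^n \rangle \le (\mathbb{Z}/p^{m-j}\mathbb{Z})^{*}$ then has a unique element of order $2$, namely $(2^n)^{o_{m-j}/2}$, which must coincide with $-1$. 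Thus the fixer of $\eta$ is $\{1, \sigma^{o_{m-j}/2}\}$, of order $2$, giving $[F(\eta):F] = o_{m-j}/2$.

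The bookkeeping with the quadratic and the reading off of degrees is routine; the step I expect to require the most care is the identification that the only possible nontrivial automorphism fixing $\eta$ is $\beta \mapsto \beta^{-1}$, and that in the even case this automorphism is exactly $\sigma^{o_{m-j}/2}$ because $-1$ is the unique involution inside $\langle 2^n \rangle$ modulo $p^{m-j}$. It is this uniqueness that forces the outer degree to be exactly $2$ rather than merely at most $2$, and so cleanly separates the even and odd cases.
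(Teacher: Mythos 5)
Your proof is correct and follows essentially the same route as the paper's: both exploit the tower $F \subseteq F(\zeta^{p^j}+\zeta^{-p^j}) \subseteq F(\zeta^{p^j})$, note that the quadratic $x^2+\eta x+1$ forces the top step to have degree $1$ or $2$, and then invoke Lemma 3 to decide which case occurs according to the parity of $d$. Your explicit computation of the Frobenius stabilizer of $\eta$ (and the observation that $-1$ is the unique involution in $\langle 2^n\rangle$ mod $p^{m-j}$) is simply a more carefully spelled-out version of the step the paper dispatches with ``it is clear that.''
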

\begin{proof}
 If $[F(\zeta^{p^j}+\zeta^{-p^j}):F]= s < \frac{o_{m-j}}{2}$, then
there exists a polynomial of degree $2s < o_{m-j}$ satisfied by
$\zeta^{p^j}$, which is a contradiction. Therefore 
$[F(\zeta^{p^j}+\zeta^{-p^j}):F] \geq \frac{o_{m-j}}{2}$. 
Since $[F(\zeta^{p^j}):F]= o_{m-j}$, it follows that 
$[F(\zeta^{p^j}+\zeta^{-p^j}):F] = \frac{o_{m-j}}{2} $ or
$o_{m-j}$. Now, if d is even, then, by lemma (3), there is a polynomial
of degree $o_{m-j}-1$ over F satisfied by $\zeta^{p^j}+ \zeta^{-p^j}$.
Hence, $[F(\zeta^{p^j}+\zeta^{-p^j}):F] = \frac{o_{m-j}}{2}  $. Now if d
is odd, from lemma (3), it is clear that $[F(\zeta^{p^j}+\zeta^{-p^j}):F]
=o_{m-j}$ and hence $F(\zeta^{p^j}+\zeta^{-p^j})=F(\zeta^{p^j})$.
\end{proof}

  \begin{theo}
Let $D_{2p^m}$ be the dihedral group \[D_{2p^m} = \langle a, b \ | \ a^{p^m} =
1,
 b^2 = 1, b^{-1} a b = a^{-1}\rangle.\] Assume that $o_{p^k}(2^n)=d$, for $1
\leq k \leq i$ and $o_{p^{i+1}}(2^n)=pd$. 
If $o_r= o_{p^r}(2^n)$, then the order of the unit group
$\mathcal{U}(FD_{2p^m})$ is 
\[ 2^n(2^n-1)\prod_{r=1}^{m}(({q_r}^2-1)({q_r}^2-q_r))^{k_r'},\]
where $q_r= \begin{cases}
 2^{\frac{no_r}{2}}, & \text{ if } d \text{ is even}\\
  2^{no_r}, & \text{ if } d \text{ is odd}
  \end{cases}$ \hspace{1cm} 
  and $k_r'= \begin{cases}
 k_r, & \text{ if } d \text{ is even}\\
  \frac{k_r}{2}, & \text{ if } d \text{ is odd.}
  \end{cases}$
 \end{theo}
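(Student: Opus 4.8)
The plan is to pass to the semisimple quotient and count. Writing $J=J(FD_{2p^m})$ for the Jacobson radical, the canonical surjection $FD_{2p^m}\to FD_{2p^m}/J$ has unit-group kernel $1+J$, and since $J$ is nilpotent we have $|1+J|=|J|$, so that
\[|\mathcal{U}(FD_{2p^m})| = |J|\,\big|\mathcal{U}(FD_{2p^m}/J)\big|.\]
Thus it suffices to find the Wedderburn decomposition of $FD_{2p^m}/J$ and the size of $J$. Setting $C=\langle a\rangle$, the subalgebra $FC\cong F[x]/(x^{p^m}-1)$ is semisimple because $\mathrm{char}\,F=2\nmid p^m$, and the factorization $x^{p^m}-1=(x-1)\prod_{s=1}^m\phi_{p^s}(x)$ together with the factoring of each $\phi_{p^s}$ into $k_s$ irreducibles of degree $o_s$ gives $FC\cong F\times\prod_{s=1}^m(\mathbb{F}_{2^{no_s}})^{k_s}$.

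Next I would use the crossed-product description $FD_{2p^m}=FC\oplus(FC)b$ with $b^2=1$ and $bcb^{-1}=\sigma(c)$, where $\sigma$ is the $F$-automorphism of $FC$ induced by $a\mapsto a^{-1}$. Since $\sigma$ permutes the simple components of $FC$, summing over each $\sigma$-orbit produces idempotents that are central in $FD_{2p^m}$ and split it into blocks. Lemma 3 identifies the orbits: when $d$ is even $\sigma$ fixes every nontrivial component, while when $d$ is odd the components are interchanged in pairs $\{\zeta,\zeta^{-1}\}$. I would then treat the three kinds of block separately. The trivial component, with idempotent $e_0=(p^m)^{-1}\sum_{c\in C}c$, yields $e_0FD_{2p^m}\cong F[b]/(b^2-1)=F[b]/((b+1)^2)$, a local algebra of $F$-dimension $2$ whose radical $F(b+1)e_0$ is one-dimensional. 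As every remaining block will be a matrix algebra (hence semisimple), this is the sole source of nilpotents, so $J=F(b+1)e_0$ and $|J|=2^n$.

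For a nontrivial component $E=\mathbb{F}_{2^{no_r}}$ fixed by $\sigma$ (the case $d$ even), the block is the crossed product $E[b;\sigma]/(b^2-1)=(E/E_0,\sigma,1)$, central simple of degree $2$ over the fixed field $E_0=E^{\sigma}$; by Lemma 4 this fixed field is $F(\zeta^{p^r}+\zeta^{-p^r})$ of degree $o_r/2$, i.e. $E_0=\mathbb{F}_{2^{no_r/2}}=\mathbb{F}_{q_r}$. For a pair $\{E,E'\}$ interchanged by $\sigma$ (the case $d$ odd), the block $(E\oplus E')\oplus(E\oplus E')b$ is the skew group algebra of $\mathbb{Z}_2$ acting transitively on two copies of $\mathbb{F}_{2^{no_r}}=\mathbb{F}_{q_r}$, with explicit matrix units $e_i,\,e_j,\,e_ib,\,e_jb$. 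Over a finite field Wedderburn's little theorem forbids noncommutative division rings, so a central simple algebra of dimension $4$ over its center must be $M_2$ of that center; hence both kinds of nontrivial block are $M_2(\mathbb{F}_{q_r})$.

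Counting gives $k_r$ fixed blocks when $d$ is even and $k_r/2$ paired blocks when $d$ is odd, which is $k_r'$ in both cases, so
\[FD_{2p^m}/J \cong F\times\prod_{r=1}^m M_2(\mathbb{F}_{q_r})^{k_r'},\]
a decomposition I would confirm by the dimension check $2+\sum_{r=1}^m 2\phi(p^r)=2p^m$. Since $|\mathcal{U}(F)|=2^n-1$ and $|\mathcal{U}(M_2(\mathbb{F}_{q_r}))|=|GL_2(\mathbb{F}_{q_r})|=(q_r^2-1)(q_r^2-q_r)$, multiplying by $|J|=2^n$ yields the stated order. The main obstacle is the third step, namely pinning down the isomorphism type of the nontrivial blocks — in particular that the $\sigma$-fixed components give $2\times2$ matrices over the \emph{real} subfield $F(\zeta^{p^r}+\zeta^{-p^r})$ rather than over $E$ itself — which is exactly where Lemmas 3 and 4 enter; the splitting into matrices, as opposed to a genuine division algebra, is then automatic from finiteness of the field.
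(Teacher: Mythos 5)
Your argument is correct, and it reaches the theorem by a genuinely different route from the paper. The paper constructs the explicit two-dimensional representations $T_{rs}$ with $a \mapsto \left(\begin{smallmatrix} 0 & 1 \\ 1 & \gamma_{rs}+\gamma_{rs}^{-1}\end{smallmatrix}\right)$, assembles them into an algebra map $T'$ onto $F \oplus \bigoplus_{r,s} M_2(F(\gamma_{rs}+\gamma_{rs}^{-1}))$, shows by a polynomial-divisibility argument that $\ker T' = F\widehat{D_{2p^m}}$, and then gets surjectivity by a dimension count; you instead decompose $FD_{2p^m}$ into blocks using the central idempotents obtained by summing the primitive idempotents of the semisimple commutative algebra $FC$ over orbits of $\sigma\colon a \mapsto a^{-1}$, and identify each nontrivial block as a degree-$2$ crossed product split by Wedderburn's little theorem. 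The two roles of Lemmas 3 and 4 are the same in both treatments (deciding whether $\zeta$ and $\zeta^{-1}$ lie in the same simple component of $FC$, and computing $[F(\zeta+\zeta^{-1}):F]$), and the resulting Wedderburn decomposition, the identification $J = F\widehat{D_{2p^m}}$ of order $2^n$, and the final count all agree with the paper. What your approach buys is conceptual transparency: surjectivity and the exact size of the radical fall out of the block structure rather than from a kernel computation plus dimension count, and the dichotomy between $\sigma$-fixed components (giving $M_2$ over the index-two subfield $\mathbb{F}_{q_r}$ with $q_r = 2^{no_r/2}$) and swapped pairs (giving $M_2(\mathbb{F}_{2^{no_r}})$) explains the case split in $q_r$ and $k_r'$. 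What it gives up is the explicit isomorphism: the paper reuses the concrete matrices $T_{rs}$, $S_{rs}$, $M_{rs}$ heavily in the proofs of Theorems 2 and 3 on the unitary subgroup, so the constructive version is not merely a stylistic choice there. One small indexing slip: the fixed field of $\sigma$ on the component attached to $\phi_{p^r}$ is $F(\zeta^{p^{m-r}}+\zeta^{-p^{m-r}})$ in the paper's notation (Lemma 4 with $j = m-r$), not $F(\zeta^{p^r}+\zeta^{-p^r})$; the degree $o_r/2$ you use is nonetheless the right one.
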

\begin{proof}
Let $f_{1,p^j}(x), f_{2,p^j}(x), \ldots, f_{k_j,p^j}(x)$ be the distinct
irreducible
factors of the cyclotomic polynomial $\phi_{p^j}(x)$ over F, where $k_j=
\frac{\phi(p^j)}{d}= \frac{p^{j-1}(p-1)}{d}$, for $1 \leq j \leq
i-1$ and $k_j=\frac{p^{i-1}(p-1)}{d}$ for $i \leq j
\leq m$. 
If $\gamma_1$ and $\gamma_2$ denote roots of distinct irreducible factors of
$\phi_{p^j}(x)$ over $F$, then $\gamma_1=\zeta^{p^{m-j}J_1}$
and $\gamma_2=\zeta^{p^{m-j}J_2}$ for some integer $J_1$ and  $J_2$ such that
$(J_1,p)=1$ and $(J_2,p)=1$. Choose
$\gamma_1'=\zeta^{p^{m-j-1}J_1}$
and $\gamma_2'=\zeta^{p^{m-j-1}J_2}$, then $\gamma_1'$ and $\gamma_2'$ are roots
of $\phi_{p^{j+1}}(x)$ over $F$. If  $\gamma_1'$ and $\gamma_2'$ are roots of
the
same irreducible factors of $\phi_{p^{j+1}}(x)$ then there exists $t$ such
that $J_1= (2^n)^tJ_2 \mod p^{j+1}$. Hence,  $J_1= (2^n)^tJ_2 \mod p^{j}$, which
is a
contradiction. Hence, $\gamma_1'$ and $\gamma_2'$ are roots of distinct
irreducible factors of $\phi_{p^{j+1}}(x)$ over F. 

Choose $\gamma_{1j}$ as $a$ root of the irreducible factor $f_{j, p}(x)$ of
$\phi_p(x)$, where $1 \leq j \leq k_1$. 
Therefore $\gamma_{1j}=
\zeta^{p^{m-1}J_j}, (J_j, p)=1$. Let $\gamma_{11}^{(l)},
\gamma_{12}^{(l)}, \cdots , \gamma_{1k_1}^{(l)}$ denote roots of distinct
irreducible factors of $\phi_{p^l}(x)$ over F, for $1 \leq l \leq m$, where
$\gamma_{1j}^{(1)}= \gamma_{1j}$ and $\gamma_{1j}^{(l)}=
\zeta^{p^{m-l}J_j}, 1 \leq j \leq k_1$. Since the number of irreducible
factors of $\phi_{p^2}(x)$ is $k_2= pk_1$, without loss of generality, assume
that $\gamma_{1j}^{(l)}$ is a root of $f_{j,p^l}(x)$ for $1 \leq j \leq k_1$ and
$1
\leq l \leq m$. Further, in a similar way, choose $\gamma_{2j}$, from the
irreducible factor $f_{j,p^2}$ of $\phi_{p^2}(x)$, where $k_1 < j \leq k_2$.

If $\gamma_{rs}$ denotes the root of the irreducible factor $f_{s,p^r}(x)$ of
$\phi_{p^r}(x)$ for $1 \leq r \leq i, 1 \leq s \leq k_r$. \\
Define $\gamma_{rs}= \begin{cases}
                      \gamma_{1s}^{(r)}, & \textrm{ if } 1 \leq s \leq k_1 \\
                      \gamma_{2s}^{(r-1)}, & \textrm{ if } k_1 < s \leq k_2 \\
                      \vdots \\
                      \gamma_{rs}^{(1)}, & \textrm{ if } k_{r-1} < s \leq k_r
,\\
                     \end{cases}$
                     
where $ \gamma_{rs}^{(1)}= \gamma_{rs}$.\\
Since $k_i=k_{i+1}= \cdots = k_m$, consider for $i+1 \leq r \leq m, 1 \leq s
\leq k_i $\\
$\gamma_{rs}= \begin{cases}
                      \gamma_{1s}^{(r)}, & \textrm{ if } 1 \leq s \leq k_1 \\
                      \gamma_{2s}^{(r-1)}, & \textrm{ if } k_1 < s \leq k_2 \\
                      \vdots \\
                      \gamma_{is}^{(r-i+1)}, & \textrm{ if } k_{i-1} < s \leq
k_i .\\
                     \end{cases}$\\
Define a matrix representation $T_{rs}$ of $D_{2p^m}$,  \[T_{rs} : D_{2p^m}
\rightarrow
M_2(F(\gamma_{rs}+ \gamma_{rs} ^{-1}))\] by the assignment

\[a \mapsto  \left(
\begin{array}{cc}
  0 & 1 \\
 1  & \gamma_{rs} + \gamma_{rs}^{-1} \\
\end{array}
\right), \ \ \  b \mapsto  \left(
\begin{array}{cc}
  1 & 0 \\
 \gamma_{rs} + \gamma_{rs}^{-1} & 1 \\
\end{array}
\right)\]
 If $d$ is even, then define $T = T_0 \displaystyle \mathop{\oplus}_{r=1}^{m}
\mathop{\oplus}_{s=1}^{k_r} T_{rs}$ , the direct sum of the given
representations
$T_{rs}, 1 \leq r \leq m, 1 \leq s \leq k_r$, and $T_0$ is the trivial
representation of
$D_{2p^m}$ over $F$ of degree 1. 

\noindent Suppose $d$ is odd. Then lemma (4) implies that $\gamma_{rs}$ and
$\gamma_{rs}^{-1}$ are roots of different irreducible factors of
$\phi_{p^r}(x)$. If $\gamma_{rs}^{-1}$ is a root of $f_{j,p^r}(x)$, then choose
$\gamma_{rj} =
\gamma_{rs}^{-1}$. Without loss of generality, assume that $\{ \gamma_{rs}| 1
\leq r \leq m, 1 \leq s \leq \frac{k_r}{2}\}$ are roots of distinct
irreducible factors of $\phi_{p^r}(x)$ such that $\gamma_{rj} \neq
\gamma_{rs}^{-1}$ for $1 \leq j, s \leq \frac{k_r}{2}$.
 Then, define $T = \displaystyle T_0 \mathop{\oplus}_{r =
1}^{m}\mathop{\oplus}_{s=1}^{\frac{k_r}{2}} T_{rs}$, the direct sum of all
distinct matrix representations. Therefore, the map  \[T : D_{2p^m} \rightarrow
F
\displaystyle \mathop \oplus_{r=1}^{m} \mathop {\oplus}_{s=1}^{k_r'}
M_2(F(\gamma_{rs}+ \gamma_{rs} ^{-1})\]
is a group homomorphism, where $k_r'=
\begin{cases}
 k_r, & \text{ if } d \text{ is even}\\
  \frac{k_r}{2}, & \text{ if } d \text{ is odd.}
  \end{cases}$\\
 By extending this group homomorphism linearly over $F$, we obtain an algebra
homomorphism 
 \[T': FD_{2p^m} \rightarrow  F
\displaystyle \mathop \oplus_{r=1}^{m} \mathop {\oplus}_{s=1}^{k_r'}
M_2(F(\gamma_{rs}+ \gamma_{rs} ^{-1})).\]
\noindent Consider the matrix representation $S_{rs}$ of $D_{2p^m}$
defined as follows: 
\[S_{rs}(a)= \left(
\begin{array}{cc}
  \gamma_{rs} & 0 \\
 0  & \gamma_{rs}^{-1} \\
\end{array}
\right),   \ \ S_{rs}(b)= \left(
\begin{array}{cc}
  0 & 1 \\
 1  & 0 \\
\end{array}
\right).\] 
Note that for any $x \in D_{2p^m}, T_{rs}(x) = M_{rs}
S_{rs}(x)M_{rs}^{-1}$,\\ where 
$M_{rs}=\left(
\begin{array}{cc}
  1 & 1 \\
  \gamma_{rs} &   \gamma_{rs}^{-1} \\
\end{array}
\right)$.\\
 Let $x = \displaystyle
\sum_{i=
0}^{p^m-1}\alpha_{i} a^i +  \sum_{i =
0}^{p^m-1}\beta_{i}a^ib \in Ker T'$.
 Then, the equation $T'(x)=0$ implies that
\begin{equation}
 \sum_{i=0}^{p^m-1}\alpha _{i}+
\sum_{i=0}^{p^m-1}\beta _{i}=0
\end{equation}
 and $\gamma_{rs}, \gamma_{rs}^{-1}, 1 \leq r \leq m, 1 \leq s \leq k_r'$ are
roots of the polynomials  $g(x)=\alpha_{0}+\alpha_{1}x+ \cdots +
\alpha_{p^{m}-1}x^{p^m-1}$ and
 $h(x)=\beta_{0}+\beta_{1}x+ \cdots + \beta_{p^m-1}x^{p^m-1}$ over $F$. 
It follows that  irreducible factors of $\phi_{p^{r}}(x)$ are factors of $g(x)$
and $h(x)$ for all $1 \leq r \leq m$. Further, since the irreducible factors
$\phi_{p^r}(x)$ are co-prime,
it follows that
$\phi_{p}(x)\phi_{p^2}(x) \cdots \phi_{p^m}(x)$ divides $g(x) \ \text{and} \ 
h(x)$, i.e., $1+x+x^2+ \cdots + x^{p^m-1}$ divides $g(x) \ \text{and} \  h(x)$,
and hence $\alpha_i =
\alpha_j, \text{and}  \ \beta_i =
\beta_j , 0 \leq i, j \leq {p^m-1}$.
Thus, from equation $(1)$, we have 
$\alpha_i = \beta_i, 0 \leq i \leq p^m-1$ and therefore $Ker T' =
F \widehat{D_{2p^m}}$. Since dimensions of $(FD_{2p^m}/F\widehat{D_{2p^m}})$
and
$F \displaystyle \mathop \oplus_{r=1}^{m} \mathop {\oplus}_{s=1}^{k_r'}
M_2(F(\gamma_{rs}+ \gamma_{rs} ^{-1}))$ over $F$ are equal, we obtain
\[\frac{FD_{2p^m}}{F \widehat{D_{2p^m}}} \cong F \mathop \oplus_{r=1}^{m}
\mathop {\oplus}_{s=1}^{k_r'}
M_2(F(\gamma_{rs}+ \gamma_{rs} ^{-1})).\] Note that
$F\widehat{D_{2p^m}}$
is a nilpotent ideal and hence $T'$ induces an epimorphism
$T'':\mathcal{U}(FD_{2p^m}) \rightarrow 
F^* \times \displaystyle \mathop \prod_{r=1}^{m} \mathop {\prod}_{s=1}^{k_r'}
GL_2(F(\gamma_{rs}+ \gamma_{rs} ^{-1}))$ such that $ker T'' =
 1+F\widehat{D_{2p^m}}$. Thus 
\[\frac{\mathcal{U}(FD_{2p^m})}{ 1+F\widehat{D_{2p^m}}} \cong
F^* \times \displaystyle \mathop \prod_{r=1}^{m} \mathop {\prod}_{s=1}^{k_r'}
GL_2(F(\gamma_{rs}+ \gamma_{rs} ^{-1})).\]and hence the result follows.

\end{proof}
\section*{Structure of The Unitary Subgroup $\mathcal{U}_*(FD_{2p^m})$}
In this section, we study the structure of the unitary subgroup
$\mathcal{U}_*(FD_{2p^m})$ with respect to canonical involution $*$.
We also prove that the unit group $\mathcal{U}(FD_{2p^m})$ is the direct product
of
the unitary subgroup with a central subgroup. 

 Let $f(x)$ be a monic irreducible polynomial of degree $n$
over $F_2$ such that $F \cong F_2[x]/ \langle f(x) \rangle$ and let
$\alpha$ be
the residue class of $x \  \textrm{modulo} \ \langle f(x)\rangle$.
Let us define the set $B$ as follows: \[B = \{1+\alpha^i(a^j+a^{-j})(1+a^kb) \ |
\ 0
\leq i\leq n-1, 1 \leq
j \leq
\frac{p^m-1}{2}, 0 \leq k \leq p^m-1\}.\] 
Clearly, $B \subseteq U_*(FD_{2p^m})$. Let $\mathcal{B}(FD_{2p^m})$
denote the 
group generated by $B$. The following theorem provides the structure of this
group.

\begin{theo}
Let $o_{p^k}(2^n)=d$, for $1
\leq k \leq i$ and $o_{p^{i+1}}(2^n)=pd$. Then, 
 $\mathcal{B}(FD_{2p^m}) \cong
\displaystyle \mathop \prod_{r=1}^{m} \mathop {\prod}_{s=1}^{k_r'}
SL_2(F(\gamma_{rs}+ \gamma_{rs} ^{-1}))$, where
  $SL_2(K)$ is the special linear group of degree 2 over the field $K$.
\end{theo}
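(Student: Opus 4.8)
The plan is to prove that the epimorphism $T''$ from the proof of the preceding theorem, whose kernel is $1+F\widehat{D_{2p^m}}$, restricts to an isomorphism of $\mathcal B(FD_{2p^m})$ onto $\prod_{r=1}^m\prod_{s=1}^{k_r'}SL_2(F(\gamma_{rs}+\gamma_{rs}^{-1}))$. Write $u_{ijk}=1+\alpha^i(a^j+a^{-j})(1+a^kb)$ for the generators in $B$. First I would record their images. In the diagonal representation $S_{rs}$ one has $S_{rs}(a^j+a^{-j})=(\gamma_{rs}^j+\gamma_{rs}^{-j})I$, so the $(r,s)$-component of $T''(u_{ijk})$ equals $I+c\,E_{rs}^{(k)}$, where $c=\alpha^i(\gamma_{rs}^j+\gamma_{rs}^{-j})$ and $E_{rs}^{(k)}=I+S_{rs}(a^kb)=\left(\begin{smallmatrix}1&\gamma_{rs}^k\\\gamma_{rs}^{-k}&1\end{smallmatrix}\right)$. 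Since $(E_{rs}^{(k)})^2=0$ and $\mathrm{tr}\,E_{rs}^{(k)}=0$, we get $\det(I+cE_{rs}^{(k)})=1$, while the trivial component is $1$ because $a^j+a^{-j}\mapsto 1+1=0$. Hence $T''(\mathcal B)\subseteq\prod_{r,s}SL_2(F(\gamma_{rs}+\gamma_{rs}^{-1}))$, and it remains to prove bijectivity of the restriction.

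For injectivity I would separate $\ker T''$ from $\mathcal B$ by an auxiliary map. Let $\pi\colon FD_{2p^m}\to F\langle b\rangle$ be the algebra epimorphism induced by $a\mapsto1,\ b\mapsto b$. Every generator satisfies $\pi(u_{ijk})=1+\alpha^i(1+1)(1+b)=1$, so $\pi(\mathcal B)=1$; on the other hand, since $p^m$ is odd, $\pi(1+c\widehat{D_{2p^m}})=1+c(1+b)$, which equals $1$ only for $c=0$. Thus any $x\in\mathcal B\cap\ker T''$ has $\pi(x)=1$ and therefore $x=1$, so $T''|_{\mathcal B}$ is injective.

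The crux is surjectivity, and here the decisive point is that for a fixed direction $k$ the different matrix blocks are populated independently. Fix $k$ and look at the subgroup generated by $\{u_{ijk}\}$; as each root subgroup $\{I+cE_{rs}^{(k)}\}\cong(F(\gamma_{rs}+\gamma_{rs}^{-1}),+)$ is abelian, this image is the $F_2$-span of the vectors $\alpha^i\,\bigl(\gamma_{rs}^j+\gamma_{rs}^{-j}\bigr)_{r,s}$. I would show these span $\prod_{r,s}F(\gamma_{rs}+\gamma_{rs}^{-1})$ over $F$. Because $p$ is odd, the group algebra $F\langle a\rangle$ is semisimple, so by the Chinese Remainder Theorem the symmetric subalgebra $F[a+a^{-1}]$ splits as $F\times\prod_{r,s}F(\gamma_{rs}+\gamma_{rs}^{-1})$; the elements $a^j+a^{-j}$, $1\le j\le(p^m-1)/2$, lie in the second factor and have vanishing coefficient at $a^0$, whereas the trivial idempotent spanning the kernel of the projection onto the product does not, so these $(p^m-1)/2$ elements map isomorphically onto $\prod_{r,s}F(\gamma_{rs}+\gamma_{rs}^{-1})$ — the dimensions agree since $\sum_{r,s}[F(\gamma_{rs}+\gamma_{rs}^{-1}):F]=(p^m-1)/2$. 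Scaling by the $\alpha^i$ produces all $F$-coefficients, so for each fixed $k$ the image of $\mathcal B$ contains the \emph{entire} direct product of root subgroups $\prod_{r,s}\{I+cE_{rs}^{(k)}\}$.

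Finally I would combine two directions. With $k=0$ and $k=1$ one has $\gamma_{rs}^0\ne\gamma_{rs}^1$ in every block, since each $\gamma_{rs}$ has order a power of the odd prime $p$, so the two root subgroups are distinct in every block and generate $SL_2(F(\gamma_{rs}+\gamma_{rs}^{-1}))$ there (two distinct transvection subgroups generate $SL_2$ of any field, by $2$-transitivity on $\mathbb P^1$). Writing $A=\prod_{r,s}\{I+cE_{rs}^{(0)}\}$ and $A'=\prod_{r,s}\{I+cE_{rs}^{(1)}\}$, both of which lie in $T''(\mathcal B)$ by the previous paragraph, I would exploit that $A$ and $A'$ are genuine direct products: choosing elements of $A$ and $A'$ supported on a single coordinate realizes each factor $SL_2(F(\gamma_{rs}+\gamma_{rs}^{-1}))$ separately inside $\langle A,A'\rangle$, and the product of these over all blocks shows $\langle A,A'\rangle=\prod_{r,s}SL_2(F(\gamma_{rs}+\gamma_{rs}^{-1}))$. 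Hence $T''$ maps $\mathcal B$ onto the full product, and with injectivity the claimed isomorphism follows. The main obstacle is precisely this passage from surjectivity onto each factor to surjectivity onto the \emph{direct product}: the independence established in the third paragraph, which replaces a Goursat-type analysis of subdirect products and rests on the semisimplicity of $F\langle a\rangle$, is what makes it work.
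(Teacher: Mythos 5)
Your proposal is correct, and for the essential step (surjectivity) it takes a genuinely different route from the paper. The two arguments share the skeleton: restrict $T''$ to $\mathcal{B}(FD_{2p^m})$, kill the kernel, and hit all of $\prod SL_2$. For injectivity the paper argues that $1+\alpha\widehat{D_{2p^m}}$ with $\alpha\neq 1$ has even support length while elements of $\mathcal{B}$ have odd length, and handles $\alpha=1$ via the factorization $1+\widehat{D_{2p^m}}=b\,u_{b,a}\cdots u_{b,a^{(p^m-1)/2}}$ together with $b\notin\mathcal{B}$; your map $\pi$ (which is exactly the homomorphism $f'$ of the paper's Lemma 5, extended to $F\langle b\rangle$) disposes of all cases at once and is cleaner. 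For surjectivity the paper works one block at a time: it uses Lemma 7 (the minimal polynomials of the $\gamma_{rs}+\gamma_{rs}^{-1}$ are pairwise distinct) to interpolate a polynomial $u'(x)$ vanishing at all other blocks, conjugates by $M_{rs}$ to turn elementary matrices into the form $I+c\left(\begin{smallmatrix}1&1\\1&1\end{smallmatrix}\right)$, and exhibits explicit preimages as products of elements of $B$. You instead show that for each fixed $k$ the image of $\langle u_{ijk}\rangle$ is the \emph{entire} direct product of root subgroups, via the splitting $F[a+a^{-1}]\cong F\times\prod_{r,s}F(\gamma_{rs}+\gamma_{rs}^{-1})$, and then generate each $SL_2$ factor from the two directions $k=0,1$. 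What your approach buys is conceptual economy: the block-independence is packaged once into the CRT decomposition rather than re-derived by hand for each generator, and no explicit preimages are needed. Two small points to tighten: the CRT splitting of $F[a+a^{-1}]$ silently uses the distinctness of the minimal polynomials of the $\gamma_{rs}+\gamma_{rs}^{-1}$ (the paper's Lemma 7) --- although your dimension count $\dim_F F[a+a^{-1}]=(p^m+1)/2$ does force it --- and the generation of $SL_2(F(\gamma_{rs}+\gamma_{rs}^{-1}))$ by the two root subgroups should be verified after conjugating by $M_{rs}$ back into the $T_{rs}$-picture, since your matrices $E_{rs}^{(k)}$ have entries in the larger field $F(\gamma_{rs})$.
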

 We will need the following results:
\begin{lemma}
$ D_{2p^m} \cap \mathcal{B}(FD_{2p^m})= \langle a \rangle$.
\end{lemma}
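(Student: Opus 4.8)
The plan is to prove the two inclusions separately. Throughout I write an element of $FD_{2p^m}$ uniquely as $R+Sb$ with $R,S\in F\langle a\rangle$, denote by $\epsilon$ the augmentation, and set $\widehat{a}=\sum_{i=0}^{p^m-1}a^i$, so that $\widehat{D_{2p^m}}=\widehat{a}(1+b)$. For the inclusion $D_{2p^m}\cap\mathcal{B}(FD_{2p^m})\subseteq\langle a\rangle$ I would use the algebra homomorphism $\pi\colon FD_{2p^m}\to F[D_{2p^m}/\langle a\rangle]\cong FC_2$ induced by $a\mapsto 1,\ b\mapsto\bar b$, where $C_2=\langle\bar b\rangle$. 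Since $\pi(a^j+a^{-j})=2=0$ in characteristic $2$, every generator satisfies $\pi\bigl(1+\alpha^i(a^j+a^{-j})(1+a^kb)\bigr)=1$; as $\pi$ is multiplicative, $\mathcal{B}(FD_{2p^m})\subseteq\pi^{-1}(1)$. Because $\pi(a^t)=1$ while $\pi(a^tb)=\bar b\neq1$, no reflection can lie in $\mathcal{B}(FD_{2p^m})$, and the inclusion follows. In coordinates this records that every $\beta=R+Sb\in\mathcal{B}(FD_{2p^m})$ has $\epsilon(R)=1$ and $\epsilon(S)=0$, a fact I reuse below.

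For the reverse inclusion it suffices to show $a\in\mathcal{B}(FD_{2p^m})$, since then $\langle a\rangle$ follows by closure. Here I work through the homomorphism $T''$ and the diagonal representations $S_{rs}$ constructed in the proof of the order theorem (recall $T_{rs}=M_{rs}S_{rs}M_{rs}^{-1}$). Granting the surjectivity of $T''$ restricted to $\mathcal{B}(FD_{2p^m})$ onto $\prod_{r,s}SL_2\bigl(F(\gamma_{rs}+\gamma_{rs}^{-1})\bigr)$ — a statement that can be read off from the explicit matrices $T_{rs}(u_{i,j,k})$ and is independent of the present lemma — I choose $\beta\in\mathcal{B}(FD_{2p^m})$ with $T''(\beta)=T''(a)$, that is, $S_{rs}(\beta)=\left(\begin{smallmatrix}\gamma_{rs}&0\\0&\gamma_{rs}^{-1}\end{smallmatrix}\right)$ for every $r,s$. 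The heart of the argument is then a rigidity computation pinning $\beta$ down to $a$ exactly.

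Writing $\beta=R+Sb$ and using $S_{rs}(a)=\mathrm{diag}(\gamma_{rs},\gamma_{rs}^{-1})$ and $S_{rs}(b)=\left(\begin{smallmatrix}0&1\\1&0\end{smallmatrix}\right)$, one obtains
\[S_{rs}(\beta)=\begin{pmatrix}R(\gamma_{rs})&S(\gamma_{rs})\\ S(\gamma_{rs}^{-1})&R(\gamma_{rs}^{-1})\end{pmatrix},\]
so diagonality of $S_{rs}(\beta)$ forces $S(\gamma_{rs})=S(\gamma_{rs}^{-1})=0$ for all $r,s$. As $S\in F[x]$, vanishing at $\gamma_{rs}^{\pm1}$ forces vanishing on the whole Frobenius orbit, and by Lemma 3 (which governs how $\gamma$ and $\gamma^{-1}$ distribute among the irreducible factors, in both the even and odd cases for $d$) these orbits exhaust the roots of every $\phi_{p^r}(x)$, $1\le r\le m$. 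Hence $S$ vanishes at all non-trivial $p^m$-th roots of unity, so $\widehat{a}\mid S$, say $S=\lambda\widehat{a}$. Now $\epsilon(S)=0$ from the first part and $\epsilon(\widehat{a})=p^m=1$ in $F$ give $\lambda=0$, whence $S=0$. Finally $R(\gamma_{rs})=\gamma_{rs}$ at all non-trivial roots together with $R(1)=\epsilon(R)=1$ shows that $R-x$ vanishes at every $p^m$-th root of unity, so $R\equiv x$ in $F\langle a\rangle$, i.e.\ $\beta=R=a$. Thus $a\in\mathcal{B}(FD_{2p^m})$ and $\langle a\rangle\subseteq\mathcal{B}(FD_{2p^m})$.

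The main obstacle is the surjectivity input used to produce $\beta$. Each image $T_{rs}(u_{i,j,k})$ equals $I+N$ with $N=\alpha^i(\gamma_{rs}^{j}+\gamma_{rs}^{-j})\left(\begin{smallmatrix}1&\gamma_{rs}^{k}\\ \gamma_{rs}^{-k}&1\end{smallmatrix}\right)$, a rank-one nilpotent, so these are transvections of determinant $1$; letting $k$ range rotates the transvection direction, and per-factor generation of $SL_2\bigl(F(\gamma_{rs}+\gamma_{rs}^{-1})\bigr)$ is the standard fact that $SL_2$ of a field is generated by its transvections. The delicate point is that the joint map onto the product is surjective rather than a proper subdirect product; establishing this requires a Chinese-Remainder/Goursat-type independence argument across the distinct simple components. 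Everything after that existence step is the clean rigidity computation above, and combining both inclusions yields $D_{2p^m}\cap\mathcal{B}(FD_{2p^m})=\langle a\rangle$.
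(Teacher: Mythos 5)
Your first inclusion, $D_{2p^m}\cap\mathcal{B}(FD_{2p^m})\subseteq\langle a\rangle$, is essentially the paper's argument: the paper uses the same homomorphism $f'\colon FD_{2p^m}\to F\langle g\mid g^2=1\rangle$ killing $a$ to conclude that the generators of $\mathcal{B}(FD_{2p^m})$ all map to $1$, hence $b\notin\mathcal{B}(FD_{2p^m})$ (the paper routes this through normality of the intersection in $D_{2p^m}$, while you exclude all reflections directly; both are fine). The rigidity computation you give for the reverse inclusion is also sound as far as it goes: if some $\beta=R+Sb\in\mathcal{B}(FD_{2p^m})$ satisfies $T''(\beta)=T''(a)$, then your evaluation of $S_{rs}(\beta)$, the coverage of all nontrivial $p^m$-th roots of unity by the Frobenius orbits of the $\gamma_{rs}^{\pm 1}$ (in both parities of $d$), and the augmentation conditions $\epsilon(R)=1$, $\epsilon(S)=0$ do force $\beta=a$.

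The genuine gap is the existence of that $\beta$: your entire $\supseteq$ direction rests on the surjectivity of $T''$ restricted to $\mathcal{B}(FD_{2p^m})$ onto $\prod_{r,s}SL_2(F(\gamma_{rs}+\gamma_{rs}^{-1}))$, which you do not prove. You correctly observe that each component is generated by transvections, but you yourself flag that the real issue is joint surjectivity onto the full product rather than a proper subdirect product, and you leave that "Chinese-Remainder/Goursat-type" step unresolved. That step is the hardest part of the paper (it occupies most of the proof of Theorem 2, via the polynomials $g(x)$, $u'(x)$ built from the minimal polynomials of Lemma 6), so importing it here inverts the paper's logical order and makes an easy lemma depend on the deepest result; it is not circular, since the paper's surjectivity argument does not use this lemma, but it is a large unproven input. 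The paper avoids all of this with a two-line identity: setting $u_{a^jb,a^i}=1+(a^i+a^{-i})(1+a^jb)$, one checks $u_{ab,a}u_{ab,a^2}\cdots u_{ab,a^{(p^m-1)/2}}=ab(1+\widehat{D_{2p^m}})$ and $u_{b,a}u_{b,a^2}\cdots u_{b,a^{(p^m-1)/2}}=b(1+\widehat{D_{2p^m}})$; since $\widehat{D_{2p^m}}$ is central with $\widehat{D_{2p^m}}^2=0$, multiplying the first product by the reversal of the second gives $ab\cdot b\cdot(1+\widehat{D_{2p^m}})^2=a$, exhibiting $a$ explicitly as a product of elements of $B$. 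You should either supply the joint surjectivity argument in full or replace that step with this direct computation.
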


\begin{proof}
  Since $D_{2p^m}$ is in the normalizer of $\mathcal{B}(FD_{2p^m})$, it
implies that
 $\mathcal{B}(FD_{2p^m}) \cap D_{2p^m}$ is a normal subgroup of
$D_{2p^m}$.
 Therefore, it is either a trivial subgroup or $\langle a^{p^i} \rangle$, for $0
\leq i \leq m-1$. 
Let us define a map \[f : D_{2p^m} \rightarrow  \langle g \ | \ g^2 = 1\rangle\]
such
that
  $f(a^i) = 1 \ \textrm{and} \ f(a^i b) = g, 0 \leq i \leq p^m -1.$ Note that
 it is a group homomorphism and  we can extend this linearly to an algebra
homomorphism
 $f'$ from $FD_{2p^m}$ to $F \langle g \rangle$. It is easy to see
that the
image of the elements of $B$ under $f'$ is 1 and therefore $b \notin
\mathcal{B}(FD_{2p^m})$.
 Hence, $D_{2p^m} \cap \mathcal{B}(FD_{2p^m}) \neq D_{2p^m}$.
Further, observe that \[u_{ab, a}u_{ab, a^2} \cdots u_{ab,a^{\frac{p^m-1}{2}}}=
ab(1 + \widehat{D_{2p^m}})\]  \[\text{and}\ \ u_{b, a}u_{b, a^2} \cdots u_{b,
a^{\frac{p^m-1}{2}}}=b(1 +\widehat{D_{2p^m}}),\]
 where $u_{a^jb, a^i} = 1 + (a^i + a^{-i})(1 + a^jb)$. Consequently,
\[a=u_{ab, a}u_{ab, a^2} \cdots
u_{ab,a^{\frac{p^m-1}{2}}} u_{b, a^{\frac{p^m-1}{2}}} \cdots u_{b, a^2}u_{b,
a}.\] Hence, $ D_{2p^m} \cap \mathcal{B}(FD_{2p^m})= \langle a \rangle$.
\end{proof}

\begin{lemma}
If $\gamma_{rs}$ denotes the root of the irreducible factor $f_{s, p^r}$ of the
cyclotomic polynomial $\phi_{p^r}(x)$ over $F$, then the minimal polynomials of
$\gamma_{r_1s_1}+\gamma_{r_1s_1}^{-1}$ and
$\gamma_{r_2s_2}+\gamma_{r_2s_2}^{-1}$ are distinct.
\end{lemma}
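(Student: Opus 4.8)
The plan is to convert the statement about equality of minimal polynomials into one about Frobenius conjugacy. Two elements of $\overline{F}$ have the same minimal polynomial over $F = \mathbb{F}_{2^n}$ exactly when they are conjugate under the Galois group, i.e.\ when one is carried to the other by a power of the Frobenius $\sigma : z \mapsto z^{2^n}$. So I would suppose, towards a contradiction, that the index pairs are distinct (with $1 \le s_i \le k_{r_i}'$, as used in the order formula) yet $y_1 := \gamma_{r_1s_1}+\gamma_{r_1s_1}^{-1}$ and $y_2 := \gamma_{r_2s_2}+\gamma_{r_2s_2}^{-1}$ share a minimal polynomial, and fix $t$ with $\sigma^t(y_1) = y_1^{2^{nt}} = y_2$.

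The key observation is that $\gamma_{rs}$ and $\gamma_{rs}^{-1}$ are precisely the two roots of $X^2 - (\gamma_{rs}+\gamma_{rs}^{-1})X + 1$. Since $\sigma^t$ fixes $F$ pointwise, it fixes the constant term $1$ and sends $y_1$ to $y_2$; hence applying $\sigma^t$ to $X^2 - y_1 X + 1$ produces $X^2 - y_2 X + 1$, and therefore $\sigma^t$ carries the root set $\{\gamma_{r_1s_1}, \gamma_{r_1s_1}^{-1}\}$ into $\{\gamma_{r_2s_2}, \gamma_{r_2s_2}^{-1}\}$. In particular $\gamma_{r_1s_1}^{2^{nt}} \in \{\gamma_{r_2s_2}, \gamma_{r_2s_2}^{-1}\}$.

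From here I would argue in two steps. First, a field automorphism preserves multiplicative order, and $\gamma_{r_1s_1}$ is a primitive $p^{r_1}$-th root of unity while both elements of the target set are primitive $p^{r_2}$-th roots; this forces $p^{r_1} = p^{r_2}$, so $r_1 = r_2 =: r$ and consequently $s_1 \neq s_2$. If $\gamma_{rs_1}^{2^{nt}} = \gamma_{rs_2}$, then $\gamma_{rs_1}$ and $\gamma_{rs_2}$ are Frobenius-conjugate, hence roots of the \emph{same} irreducible factor of $\phi_{p^r}(x)$, contradicting $s_1 \neq s_2$.

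The remaining case $\gamma_{rs_1}^{2^{nt}} = \gamma_{rs_2}^{-1}$ is where Lemma (3) and the specific labelling of the $\gamma_{rs}$ enter, and I expect it to be the main obstacle. If $d$ is even, Lemma (3) gives that $\gamma_{rs_2}$ and $\gamma_{rs_2}^{-1}$ are roots of the same factor $f_{s_2,p^r}(x)$; then conjugacy of $\gamma_{rs_1}$ with $\gamma_{rs_2}^{-1}$ forces $\gamma_{rs_1}$ to be a root of $f_{s_2,p^r}(x)$ too, i.e.\ $s_1 = s_2$, a contradiction. If $d$ is odd, Lemma (3) says inversion sends each irreducible factor to a different one, so it pairs the $k_r$ factors into $k_r/2$ inverse-pairs; since the indices $1 \le s \le k_r' = k_r/2$ were chosen in the proof of the order formula to select exactly one factor from each pair (the condition $\gamma_{rj} \neq \gamma_{rs}^{-1}$), the factor containing $\gamma_{rs_2}^{-1}$ lies outside $\{f_{1,p^r}, \dots, f_{k_r',p^r}\}$, so $\gamma_{rs_1}$ cannot be conjugate to $\gamma_{rs_2}^{-1}$ — again a contradiction. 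Ruling out both subcases shows the minimal polynomials are distinct.
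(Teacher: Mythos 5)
Your proof is correct, and it takes a cleaner and more uniform route than the paper's. The paper splits into cases on $r_1$ versus $r_2$: for $r_1=r_2$ it argues (much as you do) that equality of the minimal polynomials of the sums would force $\gamma_{rs_1}$ and $\gamma_{rs_2}$ (or $\gamma_{rs_2}^{-1}$) to share a minimal polynomial, contradicting coprimality of the factors; but for $r_1\neq r_2$ it simply compares the \emph{degrees} $[F(\gamma_{r_js_j}+\gamma_{r_js_j}^{-1}):F]$ computed in Lemma 4 and concludes the minimal polynomials differ because their degrees differ, leaving the remaining configurations to ``a similar way.'' Your Frobenius-conjugacy argument subsumes all of this: passing from $\sigma^t(y_1)=y_2$ to $\sigma^t\{\gamma_{r_1s_1},\gamma_{r_1s_1}^{-1}\}=\{\gamma_{r_2s_2},\gamma_{r_2s_2}^{-1}\}$ via the quadratic $X^2-yX+1$, and then using that automorphisms preserve multiplicative order, kills the case $r_1\neq r_2$ in one line --- including the configuration $r_1<r_2\leq i$, where $o_{r_1}=o_{r_2}=d$ and the two degrees coincide, so the paper's degree-comparison genuinely does not apply and some version of your order argument is in fact \emph{needed}. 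You are also right, and more careful than the paper, in flagging that the statement requires $1\leq s\leq k_r'$ when $d$ is odd (with the representatives chosen so that no two are inverse to each other); without that normalization the lemma is false, since $\gamma_{rs}+\gamma_{rs}^{-1}=\gamma_{rj}+\gamma_{rj}^{-1}$ for the paired index $j$. In short: same key mechanism as the paper in the hardest case, but your organization is more robust and closes a gap the paper waves at.
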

\begin{proof}
First assume that $r_1 = r_2 = r$. Then, if $d$ is even, we have
$[F(\gamma_{rs_i} + \gamma_{rs_i}^{-1}) : F] = \frac{o_r}{2}$, where $1 \leq i
\leq 2$. Let  
$f(x) = a_0 + \cdots + a_{\frac{o_r}{2}}x^{\frac{o_r}{2}}$ be the minimal
polynomial over $F$ satisfied by both $\gamma_{rs_1} + \gamma_{rs_1}^{-1}$
and
 $\gamma_{rs_2}+ \gamma_{rs_2}^{-1}$. It follows that $\gamma_{rs_1}$ and
$\gamma_{rs_2}$ satisfy the same minimal polynomial over $F$. This is a
contradiction, because the irreducible factors $f_{s_1, p^r}$ and $f_{s_2, p^r}$
are coprime. 
Next, if $d$ is odd, then $[F(\gamma_{rs_i} + \gamma_{rs_i}^{-1}): F ] = o_r$.
Therefore, the degree of the minimal polynomial of $\gamma_{rs_i} +
\gamma_{rs_i}^{-1}$ over $F$ is $o_r$. Since
$\gamma_{rs_i}$ and
$\gamma_{rs_i^{-1}}$ are roots of different irreducible factors of
$\phi_{p^r}(x)$ with degree $o_r$ and the factors are co-prime, it implies that
the minimal polynomials of $\gamma_{rs_1}+\gamma_{rs_1}^{-1}$ and
$\gamma_{rs_2}+\gamma_{rs_2}^{-1}$ are distinct.

Now assume that $r_1 < r_2$. If $i < r_1 < r_2$, then the degree of the minimal
polynomial of $\gamma_{r_1s_1}+ \gamma_{r_1s_1}^{-1}$ is $\frac{p^{r_1-i}d}{2}$
and the degree of the minimal polynomial of $\gamma_{r_2s_2} +
\gamma_{r_2s_2}^{-1}$ is $\frac{p^{r_2-i}d}{2}$, if $d$ is even. Otherwise the
degrees are $p^{r_1-i}d$ and $p^{r_2-i}d$, respectively. Hence, their minimal
polynomials are distinct. 

Now, if $r_1 \leq i < r_2 $, then the degree of the minimal polynomial of 
 $\gamma_{r_1s_1}+\gamma_{r_1s_1}^{-1}$ is $\frac{d}{2}$ and the
degree of the 
 minimal polynomial of $\gamma_{r_2s_2}+\gamma_{r_2s_2}^{-1}$ is
$\frac{p^{r_2-i}d}{2}$, if $d$ is 
 even. Otherwise the degree of the minimal polynomials are $d$ and $p^{r_1-i}d$,
respectively. Hence, the result follows. In a similar way one can prove for
other
cases as well.   
\end{proof}
\textbf{Proof of the theorem}: Observe that the image of elements
of $B$ are in
$\displaystyle \mathop \prod_{r=1}^{m} \mathop {\prod}_{s=1}^{k_r'}
SL_2(F(\gamma_{rs}+ \gamma_{rs} ^{-1}))$ under the map $T''$. Suppose $T'''$ is
the restricted map of $T''$ to
$\mathcal{B}(FD_{2p^m})$,  i.e.,
\[T''':\mathcal{B}(FD_{2p^m}) \rightarrow 
\displaystyle \mathop \prod_{r=1}^{m} \mathop {\prod}_{s=1}^{k_r'}
SL_2(F(\gamma_{rs}+ \gamma_{rs} ^{-1}))\] such that
$T'''(x)=T''(x)$ for $ x \in \mathcal{B}(FD_{2p^m})$. Therefore, $kerT''' \leq
kerT'' = 1 + F\widehat{D_{2p^m}}$. Consider the element $t_\alpha =  1 + \alpha
\widehat{D_{2p^m}} $, where $\alpha$ is a non zero element of $F$. Now, if
$\alpha \neq 1$, then the length of $1 + \alpha \widehat{D_{2p^m}}$ is $2p^m$.
Since the length of each element of $\mathcal{B}(FD_{2p^m})$ is odd, it follows
that $t_{\alpha} \notin \mathcal{B}(FD_{2p^m})$. Further, if $\alpha = 1 $, then
the element $1 + \widehat{D_{2p^m}}$ can be written as   
 $ 1 + \widehat{D_{2p^m}} = b u_{b, a}u_{b, a^2} \cdots
u_{b, a^\frac{p^m-1}{2}}$. Since $b \notin \mathcal{B}(FD_{2p^m})$, it follows
that $kerT'''=\{ 1 \}$.

 \noindent It is known that \[ SL_2(F(\zeta+\zeta^{-1})) = \Bigg
\langle \left(
 \begin{array}{cc}
   1 & 0 \\
  u & 1 \\
\end{array}
 \right) 
  , \ \ \left(
 \begin{array}{cc}
   1 & v \\
  0 & 1 \\
 \end{array}
 \right) \ \Bigg | \  u, v \in F(\zeta+\zeta^{-1})  \Bigg \rangle .\] For onto
of $T'''$, choose an element $U_{rs}$ from $\displaystyle \mathop
\prod_{r=1}^{m}
\mathop {\prod}_{s=1}^{k_r'}
SL_2(F(\gamma_{rs}+ \gamma_{rs} ^{-1}))$, where the component of $U_{rs}$
corresponding to the matrix representation $T_{rs}$ is 
$\left(
\begin{array}{cc}
  1 & 0 \\
 u_{rs}  & 1 \\
\end{array}
\right),$ such that $u_{rs} \in F(\gamma_{rs}+\gamma_{rs}^{-1})$ 
and the remaining components are identity matrices. Let $y_{rs}= \displaystyle
\prod_{t=1}^{m} \prod_{\stackrel{j=1}{t \neq r \mbox{ and } j \neq
s}}^{k_t'}f'_{t,j}(\gamma_{rs}+\gamma_{rs}^{-1})$, where $f'_{t,j}(x)$ is the
minimal polynomial of $\gamma_{tj} + \gamma_{tj}^{-1}$ over $F$. If we define
$g(x)=\displaystyle \prod_{t=1}^{m} \prod_{\stackrel{j=1}{t
\neq r \mbox{ and } j \neq s}}^{k_t'}f'_{t,j}(x)$,
then $g(\gamma_{tj}+\gamma_{tj}^{-1})=0$ for $1 \leq t \leq m, \ 1 \leq j \leq
k_t', \mbox{ and } {t \neq r \mbox{ and } j \neq s}$ and
$g(\gamma_{rs}+\gamma_{rs}^{-1})= y_{rs}$, a nonzero element of
$F(\gamma_{rs}+\gamma_{rs}^{-1})$. Choose $\{y_{rs},
y_{rs}(\gamma_{rs}+\gamma_{rs}^{-1}), \cdots ,
y_{rs}(\gamma_{rs}+\gamma_{rs}^{-1})^{t_r-1}\}$
as a basis of $F(\gamma_{rs}+\gamma_{rs}^{-1})$ over $F$, where
$t_r=[F(\gamma_{rs}+\gamma_{rs}^{-1}):F]$. Thus, the element $u_{rs}$ can be
written as $u_{rs}= \displaystyle y_{rs}\sum_{q = 0}^{t_r-1}\beta_q
(\gamma_{rs}+\gamma_{rs}^{-1})^{q}$. Assume that $u'(x) = g(x)u(x)$, where
$u(x)=\beta_0+\beta_1x+ \cdots +\beta_{t_r-1} x^{t_r-1}$, a polynomial over $F$.
Now
$u'(\gamma_{rs}+\gamma_{rs}^{-1})=u_{rs}$ and
$u'(\gamma_{tj}+\gamma_{tj}^{-1})=0$ for $1 \leq
t \leq m, 1 \leq j \leq k_t', t \neq r \mbox{ and } j \neq s$. If
$u'(x)= a_0 + a_1 x+ \cdots + a_k x^{k}$, then $U_{rs}$ can be written as
$U_{rs}= e_0e_1 \cdots e_k$, where $e_h$ is an element of
$\prod_{t=1}^{m}\prod_{j=1}^{k_{t}'}SL_2(F(\gamma_{tj}+\gamma_{tj}^{-1}))$ such
that the component ${e_h}^{(t,j)}$ of $e_h$ corresponding to the matrix
representation $T_{tj}$ is $\left(
\begin{array}{cc}
  1 & 0 \\
 a_h(\gamma_{tj} + \gamma_{tj}^{-1})^h & 1 \\
\end{array}
\right)$, for $0 \leq h \leq k$ and for $1 \leq t \leq m, 1 \leq j \leq k_t'$.
Now we will prove that the preimage of $e_h$ is in
$\mathcal{B}(FD_{2p^m})$
under the map $T'''$.

If $a_h=0$, then it is clear that the preimage of $e_h$ is in
$\mathcal{B}(FD_{2p^m})$. If $a_h \neq 0$, then choose $M$ from
$\prod_{t=1}^{m}\prod_{j=1}^{k_{t}'}GL_2(F(\gamma_{tj}+\gamma_{tj}^{-1}))$, such
that the component $M_{tj}$ corresponding to the matrix representation $T_{tj}$
is $M_{tj}= \left(
\begin{array}{cc}
  1 & 1 \\
 \gamma_{tj} & \gamma_{tj}^{-1} \\
\end{array}
\right) $, for $1 \leq t \leq m, 1 \leq j \leq k_t'$.
Note that \[{M_{tj}}^{-1}e_h^{(t,j)}M_{tj}= 
\left(
\begin{array}{cc}
  1 & 0 \\
 0 & 1 \\
\end{array}
\right) + a_h(\gamma_{tj} +
\gamma_{tj}^{-1})^{h-1} \left(
\begin{array}{cc}
  1 & 1 \\
 1 & 1 \\
\end{array}
\right).\]

Now $(\gamma_{tj} + \gamma_{tj}^{-1})^{h-1}= b_0 + \displaystyle \sum_{q
=1}^{k}b_q((\gamma_{tj})^q + (\gamma_{tj}^{-1})^q )$,\\ where $b_i \in F_2$ and 
$k= \begin{cases}
h-1, & \textrm{ if } h > 0 \\
\frac{p^m-1}{2}, & \textrm{ if } h=0 
\end{cases}$

Hence, \[{M_{tj}}^{-1}e_h^{(t,j)}M_{tj} = \left(
\begin{array}{cc}
  1 & 0 \\
 0 & 1 \\
\end{array}
\right) + (b_0' + \sum_{q = 1}^{k} b_q'(\gamma_{tj}^q +
\gamma_{tj}^{-q} ))\left(
\begin{array}{cc}
  1 & 1 \\
 1 & 1 \\
\end{array}
\right),\] where $b_q'=a_hb_q \in F$ and the component of $M^{-1}e_hM$
corresponding to the matrix representation $T_{tj}$ is
${M_{tj}}^{-1}e_h^{(t,j)}M_{tj}$. If \[\displaystyle \alpha = 1 + (b_0' +
\sum_{q = 1}^{k} b_q'(a^q + a^{-q} ))(1+b),\] then $T''(\alpha)= e_h$, 
because $S_{tj}(x)=
M_{tj}^{-1}T_{tj}(x)M_{tj} \ \forall x \in D_{2p^m}$. Now, if $b_0'= 0$, then
\[\displaystyle \alpha = 1 + \sum_{q = 1}^{k} b_q'(a^q + a^{-q})(1+b)
= \displaystyle \prod_{q=1}^{k}(1 + b_q'(a^q + a^{-q})(1+b)),\] 
which is a product of the elements of $B$. Next, if $b_0' \neq 0$, then
consider 
\begin{align*}
T''(\alpha)&= T''(1+b_0'(1+b)+\displaystyle \sum_{q=1}^{k}(a^q+a^{-q})(1+b))\\
           & = T''(1+b_0'(1+b))T''(\displaystyle
\prod_{q=1}^{k}(1+(a^q+a^{-q})(1+b))).
\end{align*}
 Further, since $\widehat{D_{2p^m}} \in ker T'$, therefore
\begin{align*}
T'(1+b_0'(1+b))&= T'((1+b_0'(1+b))(1+\widehat{D_{2p^m}}))\\
               &=T'(1+b_0'(1+b+\widehat{D_{2p^m}}))\\
                &=\prod_{i=1}^{\frac{p^m-1}{2}} T'(1+b_0'(a^i+a^{-i})(1+b)),
\end{align*}                
                is an element of
$T''(\mathcal{B}(FD_{2p^m}))$. Then, $e_h$ has a preimage in
$\mathcal{B}(FD_{2p^m})$. in a similar way, one can prove for other
generators as well. Hence 
\[\mathcal{B}(FD_{2p^m}) \cong
\displaystyle \mathop \prod_{r=1}^{m} \mathop {\prod}_{s=1}^{k_r'}
SL_2(F(\gamma_{rs}+ \gamma_{rs} ^{-1})).\] 

\begin{theo} 
  The unitary subgroup $\mathcal{U}_*(FD_{2p^m})$ of the group algebra
 $FD_{2p^m}$ is the direct product of the subgroup
 $\mathcal{B}(FD_{2p^m})$ with the group $1+F\widehat{D_{2p^m}}$.
Further,
  $\displaystyle \mathcal{U}(FD_{2p^m})=\mathcal{U}_*(FD_{2p^m})
\times \displaystyle \mathop \prod_{r=1}^{m} \mathop {\prod}_{s=1}^{k_r'}
\langle x_{rs} \rangle \times \langle x \rangle$, where
$x$ and $ x_{rs}$  are invertible elements in the 
 center of the group algebra $FD_{2p^m}$ such that the order of $x$ is $2^n-1$
and the order of $2^{nt_r}-1$, if $d$ is odd, otherwise $2^{n(\frac{o_r}{2})}-1$.
 \end{theo}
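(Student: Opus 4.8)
The plan is to read off both statements from the epimorphism $T''\colon\mathcal U(FD_{2p^m})\to F^*\times\prod_{r=1}^m\prod_{s=1}^{k_r'}GL_2(K_{rs})$ produced in the proof of the order theorem, where I abbreviate $K_{rs}=F(\gamma_{rs}+\gamma_{rs}^{-1})$ and where $\ker T''=1+F\widehat{D_{2p^m}}$. The decisive first step is to locate the image of the unitary subgroup. Because $*$ fixes $\gamma_{rs}+\gamma_{rs}^{-1}$, it acts $K_{rs}$-linearly on the block $M_2(K_{rs})$, and checking the images of $a$ and $b$ shows that under the equivalent representation $S_{rs}$ the involution becomes $X\mapsto PX^{t}P^{-1}$ with $P=\left(\begin{smallmatrix}0&1\\1&0\end{smallmatrix}\right)$. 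Hence a unitary $u$ (one with $uu^*=1$) has each block $Y=S_{rs}(u)$ satisfying $YPY^{t}=P$; but a direct computation in characteristic $2$ gives $YPY^{t}=(\det Y)P$, so this is exactly $\det Y=1$. Since also $\varepsilon(u)=1$ (from $\varepsilon(u)^2=\varepsilon(uu^*)=1$, where $\varepsilon$ is the augmentation), I obtain $T''(\mathcal U_*(FD_{2p^m}))\subseteq\{1\}\times\prod_{r,s}SL_2(K_{rs})$.

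For the first assertion I would check that both factors sit inside $\mathcal U_*(FD_{2p^m})$: each $1+\alpha\widehat{D_{2p^m}}$ is $*$-invariant and squares to $1$ because $\widehat{D_{2p^m}}^2=0$, while the generators of $\mathcal B(FD_{2p^m})$ are unitary. By the structure theorem for $\mathcal B(FD_{2p^m})$ the map $T'''$ sends $\mathcal B(FD_{2p^m})$ isomorphically onto $\prod_{r,s}SL_2(K_{rs})$; combined with the previous paragraph, every $u\in\mathcal U_*(FD_{2p^m})$ has the same $T''$-image as some $w\in\mathcal B(FD_{2p^m})$, so $uw^{-1}\in\ker T''=1+F\widehat{D_{2p^m}}$ and $\mathcal U_*(FD_{2p^m})=\mathcal B(FD_{2p^m})\,(1+F\widehat{D_{2p^m}})$. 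The intersection is trivial by the length argument already used for $\mathcal B(FD_{2p^m})$ (no $1+\alpha\widehat{D_{2p^m}}$ with $\alpha\neq0$ lies in $\mathcal B(FD_{2p^m})$), and the two factors commute elementwise because $\widehat{D_{2p^m}}$ is central with $w\widehat{D_{2p^m}}=\varepsilon(w)\widehat{D_{2p^m}}=\widehat{D_{2p^m}}$ for every $w\in V(FD_{2p^m})$. This gives the internal direct product $\mathcal U_*(FD_{2p^m})=\mathcal B(FD_{2p^m})\times(1+F\widehat{D_{2p^m}})$.

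For the second assertion I would first split each matrix block. In characteristic $2$ the squaring map is a bijection of the odd-order group $K_{rs}^*$, so $\det$ carries the centre $K_{rs}^*I$ of $GL_2(K_{rs})$ onto $K_{rs}^*$ while $SL_2(K_{rs})\cap K_{rs}^*I=1$; hence $GL_2(K_{rs})=SL_2(K_{rs})\times K_{rs}^*I$ with central factor cyclic of order $2^{nt_r}-1$, $t_r=[K_{rs}:F]$. Thus the codomain of $T''$ is $F^*\times\prod_{r,s}SL_2(K_{rs})\times\prod_{r,s}K_{rs}^*I$, in which $T''(\mathcal U_*(FD_{2p^m}))$ is precisely the middle factor. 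It remains to lift $F^*$ and the factors $K_{rs}^*I$ to central units. Since the class sums $a^i+a^{-i}$ together with $\widehat{D_{2p^m}}$ span $Z(FD_{2p^m})$ and, by the lemma on distinct minimal polynomials, the elements $\gamma_{rs}+\gamma_{rs}^{-1}$ have pairwise distinct irreducible minimal polynomials over $F$, a dimension count shows that $T'$ maps $Z(FD_{2p^m})$ onto the centre $F\oplus\bigoplus_{r,s}K_{rs}I$ of the image algebra. Using this surjectivity I choose central units $x$ and $x_{rs}$ whose $T''$-images are a generator of $F^*$, respectively a generator of $K_{rs}^*I$ in the block $(r,s)$, with all other coordinates trivial; replacing each by its odd part (its $2$-part lies in the $2$-group $\ker T''\subseteq\mathcal U_*(FD_{2p^m})$) makes the orders exactly $2^n-1$ and $2^{nt_r}-1$.

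To assemble the direct product, note that $x$ and the $x_{rs}$ are central, so $K=\prod_{r,s}\langle x_{rs}\rangle\times\langle x\rangle$ is an abelian subgroup commuting with all of $\mathcal U(FD_{2p^m})$. Because $\ker T''\subseteq\mathcal U_*(FD_{2p^m})$ and the images of $\mathcal U_*(FD_{2p^m})$ and $K$ jointly exhaust $F^*\times\prod_{r,s}GL_2(K_{rs})$, we get $\mathcal U(FD_{2p^m})=\mathcal U_*(FD_{2p^m})\,K$; and $\mathcal U_*(FD_{2p^m})\cap K=1$ since an element of $K$ inside $\mathcal U_*(FD_{2p^m})$ must have trivial $F^*$-coordinate and determinant $1$ on every block, which by the exact odd orders of $x$ and the $x_{rs}$ forces it to be $1$. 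As $K$ is central, $\mathcal U_*(FD_{2p^m})$ is automatically normal, and the internal direct product $\mathcal U(FD_{2p^m})=\mathcal U_*(FD_{2p^m})\times\prod_{r,s}\langle x_{rs}\rangle\times\langle x\rangle$ follows. I expect the construction in the third paragraph to be the main obstacle: realizing the cyclic central factors by genuine central units of the prescribed odd orders and proving trivial intersection, because the nontrivial $2$-group $1+F\widehat{D_{2p^m}}=\ker T''$ has to be absorbed entirely into $\mathcal U_*(FD_{2p^m})$ and kept disjoint from the odd-order factors; the characteristic-$2$ coincidence $\{Y:YPY^{t}=P\}=SL_2$ is what lands the unitary subgroup exactly in the $SL_2$-part and powers both halves of the statement.
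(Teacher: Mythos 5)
Your argument is correct, and although it shares the overall skeleton of the paper's proof (split $GL_2(K_{rs})=SL_2(K_{rs})\times K_{rs}^*I$ using that $|K_{rs}^*|$ is odd, lift the scalar factors to central units, then assemble the direct product), it identifies the unitary subgroup by a genuinely different mechanism. The paper never computes $T''(\mathcal{U}_*(FD_{2p^m}))$ at all: it builds $x$ and the $x_{rs}$ explicitly as polynomials $h'_{rs}(a+a^{-1})$ and $Y(a+a^{-1})$ in the central element $a+a^{-1}$, compares the order of $W\times\langle x\rangle\times\mathcal{B}(FD_{2p^m})\times(1+F\widehat{D_{2p^m}})$ with the order of $\mathcal{U}(FD_{2p^m})$ obtained in Theorem 1 to force equality, and then extracts $\mathcal{U}_*(FD_{2p^m})=\mathcal{B}(FD_{2p^m})\times(1+F\widehat{D_{2p^m}})$ because the complement $W\times\langle x\rangle$ is central of odd order (central elements are symmetric, since the class sums are inverse-closed, so a central unitary element squares to $1$) and therefore meets $\mathcal{U}_*(FD_{2p^m})$ trivially. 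You instead prove the characteristic-two identity $YPY^{t}=(\det Y)P$ with $P=\left(\begin{smallmatrix}0&1\\1&0\end{smallmatrix}\right)$, so that the canonical involution acts blockwise as $X\mapsto PX^{t}P^{-1}$ and a normalized unit is unitary exactly when every block has determinant $1$; this places $T''(\mathcal{U}_*(FD_{2p^m}))$ inside $\{1\}\times\prod SL_2(K_{rs})$ and yields $\mathcal{U}_*(FD_{2p^m})=\mathcal{B}(FD_{2p^m})\cdot(1+F\widehat{D_{2p^m}})$ with no order count. Similarly, your dimension count showing that $T'$ carries $Z(FD_{2p^m})$ onto the centre of the image algebra (resting on the lemma that the $\gamma_{rs}+\gamma_{rs}^{-1}$ have distinct minimal polynomials) replaces the paper's explicit preimages $z'_{rs}+\alpha_{rs}Y(a+a^{-1})$, and your exact-odd-order intersection argument replaces the paper's symmetric-central one. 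Your route buys independence from the order formula of Theorem 1 and an intrinsic description of where the unitary subgroup sits among the Wedderburn components; the paper's route buys explicit central units and a shorter finish once Theorems 1 and 2 are in hand. Both are sound.
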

\begin{proof}
Let $F(\gamma_{rs}+ \gamma_{rs}^{-1})^*= \langle \eta_{rs} \rangle$ be a
cyclic group of order ${2^n}^{t_r}-1$, where $t_r=[F(\gamma_{rs}+
\gamma_{rs}^{-1}):F]$. Then, \[GL_2(F(\gamma_{rs}+ \gamma_{rs} ^{-1}))=
SL_2(F(\gamma_{rs}+ \gamma_{rs} ^{-1}))\times \Bigg\{\left(
\begin{array}{cc}
  \eta_{rs}^j & 0 \\
 0 & \eta_{rs}^j \\
\end{array}
\right) | 1 \leq j \leq {2^n}^{t_r}-1 \Bigg\}.\] Since $\eta_{rs} \in
F(\gamma_{rs}+
\gamma_{rs}^{-1})$ and  $\{y_{rs},
y_{rs}(\gamma_{rs}+\gamma_{rs}^{-1}), \cdots ,
y_{rs}(\gamma_{rs}+\gamma_{rs}^{-1})^{t_r-1}\}$ is a basis of $F(\gamma_{rs}+
\gamma_{rs}^{-1})$, it implies that $\eta_{r,s} =
y_{rs}h_{rs}(\gamma_{rs} + \gamma_{rs}^{-1}) = h_{rs}'(\gamma_{rs}
+\gamma_{rs}^{-1})$, where $h_{rs}(x) \in
F[x] \ \text{and} \ h_{rs}'(x) = g(x)h_{rs}(x)$. Further, note that
$h_{rs}'(\gamma_{tj} + \gamma_{tj}^{-1}) = 0$ for $t \neq r \text{ and} j \neq
s$. If $\alpha_{rs}$ denotes the constant term of the polynomial $h_{rs}'(x)$
over $F$, then the image of $h_{rs}'(a+a^{-1})$ under the map $T'$ is $X_{rs}$,
where the first component of $X_{rs}$ is $\alpha_{rs}$ and the component
corresponding to the matrix representation $T_{rs}$ is $\left(
\begin{array}{cc}
  \eta_{rs} & 0 \\
 0  & \eta_{rs} \\
\end{array}
\right)$ and all the remaining components are zero matrices. Define
$Y(x)=\displaystyle \prod_{t=1}^{m}\prod_{j=1}^{k_t'}f'_{t,j}(x)$, where
$f'_{t,j}(x)$ is the minimal polynomial of $\gamma_{tj} + \gamma_{tj}^{-1}$ over
$F$. Without loss of generality, one can take that the constant coefficient of
$Y(x)$ is 1. Note that $Y(\gamma_{tj} +\gamma_{tj}^{-1}) = 0, \ 1 \leq t \leq
m, 1 \leq j \leq k_t'$. Therefore, the image of $Y(a+a^{-1})$ under the map $T'$
is the element whose first component is 1 and all the remaining components are
zero matrices. Next, we will find a preimage of $Z_{rs}$ where $Z_{rs}$ is the
element of $F^* \times \displaystyle \mathop \prod_{t=1}^{m} \mathop
{\prod}_{j=1}^{k_t'}
GL_2(F(\gamma_{tj}+ \gamma_{tj} ^{-1}))$ such that its first component is 1, the
component corresponding
to the matrix representation $T_{rs}$ is 
$\left(
\begin{array}{cc}
  \eta_{rs} & 0 \\
 0  & \eta_{rs} \\
\end{array}
\right)$ and all the remaining components are identity matrices. Take 
\[z_{rs} =\displaystyle \sum_{t = 1}^{m} \sum_{\stackrel{j=1}{t \neq r \text{
and } j \neq s}}^{k_t'} h_{tj}'(a + a^{-1})^{2^{nt_r} -1}.\]
 The image of
$z_{rs}$ under the map $T'$ is the element whose first component is either 0 or
1 and the component corresponding to the matrix representation $T_{rs}$ is 
$\left(
\begin{array}{cc}
  0 & 0 \\
 0  & 0 \\
\end{array}
\right)$ and all the remaining components are identity matrices. If we define
$z'_{rs}=z_{rs}+h'_{rs}$, then it is clear that the image of $z'_{rs}$ is the
element whose first component is either $\alpha_{rs}$ or $1+\alpha_{rs}$ and the
component corresponding to the matrix representation $T_{rs}$ is 
$\left(
\begin{array}{cc}
  \eta_{rs} & 0 \\
 0  & \eta_{rs} \\
\end{array}
\right)$ and all the remaining components are identity matrices.
Therefore, $z'_{rs}+(1+\alpha_{rs})Y(a+a^{-1})$ or
$z'_{rs}+\alpha_{rs}Y(a+a^{-1})$ is a preimage of the element $Z_{rs}$ which
lies in the center of $FD_{2p^m}$. If $ F^*=\langle \eta \rangle$, then the
element $z = \displaystyle \sum_{t = 1}^{m} \sum_{j=1}^{k_t'} h_{tj}'(a +
a^{-1})^{2^{nt_r} -1}+ \eta Y(a+a^{-1})$ or $z = \displaystyle \sum_{t = 1}^{m}
\sum_{j=1}^{k_t'} h_{tj}'(a + a^{-1})^{2^{nt_r} -1}+ (1+\eta) Y(a+a^{-1})$ is a
preimage of the element $Z \in F^* \times \displaystyle \mathop \prod_{t=1}^{m}
\mathop {\prod}_{j=1}^{k_t'}
GL_2(F(\gamma_{tj}+ \gamma_{tj} ^{-1}))$, whose first component is $\eta$ and
all
the remaining components are identity matrices. 
Therefore, there are elements $x_{rs}$ of order $2^{nt_r}-1$ and an
element $x$ of order $2^n-1$ in $\mathcal{U}(FD_{2p^m})$. Since
$\langle x_{rs} \rangle \cap \displaystyle \langle  
x_{tj} \ | \ 1 \leq t \leq m, 1 \leq s \leq k_t', t \neq r \text{ and } j
\neq s\rangle = \{1\} $, take $W = \displaystyle
\prod_{t = 1}^{m} \prod_{j=1}^{k_t'}\langle
x_{tj} \rangle$. Note that $\langle x \rangle \cap W = \{1 \}$.
The order of the group $ \langle x \rangle \times W$ is odd and is contained in
the center of $FD_{2p^m}$, therefore $W \cap \mathcal{U}_*(FD_{2p^m}) = \{1 \}$.
By compairing the order, we obtain that 
 $\mathcal{U}(FD_{2p^m}) = W \times \langle x \rangle \times
(\mathcal{B}(FD_{2p^m}) \times
(1+F \widehat{D_{2p^m}}))$ and therefore, $\mathcal{U}_{*}(FD_{2p^m})=
\mathcal{B}(FD_{2p^m})
\times
(1+F \widehat{D_{2p^m}})$. The group $1+F \widehat{D_{2p^m}}= \displaystyle
\prod_{i=0}^{n-1}
\langle 1+\alpha^i \widehat{D_{2p^m}} \rangle$, is an elementary abelian
$2$-group.
\end{proof}
 
 \begin{corollary}
 The commutator subgroup
$\mathcal{U}'(FD_{2p^m})=\mathcal{U}_*'(FD_{2p^m})$.
Also, $\mathcal{U}'(FD_{2p^m})$ is a normal subgroup of
$\mathcal{B}(FD_{2p^m})$.
 \end{corollary}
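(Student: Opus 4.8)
The plan is to leverage the explicit direct-product decompositions of $\mathcal{U}(FD_{2p^m})$ and $\mathcal{U}_*(FD_{2p^m})$ furnished by the preceding theorem, combined with the elementary identity that for a direct product $G = \prod_i G_i$ the derived subgroup satisfies $G' = \prod_i G_i'$.

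First I would record the two structural facts. By the previous theorem,
\[
\mathcal{U}(FD_{2p^m}) = W \times \langle x \rangle \times \mathcal{B}(FD_{2p^m}) \times (1+F\widehat{D_{2p^m}}),
\]
where $W = \prod_{r,s} \langle x_{rs} \rangle$ and $\langle x \rangle$ are abelian subgroups lying in the center of $FD_{2p^m}$, and $1+F\widehat{D_{2p^m}}$ is an elementary abelian $2$-group, while the unitary subgroup is
\[
\mathcal{U}_*(FD_{2p^m}) = \mathcal{B}(FD_{2p^m}) \times (1+F\widehat{D_{2p^m}}).
\]
Forming commutator subgroups factor-by-factor, each abelian factor contributes trivially, so both $\mathcal{U}'(FD_{2p^m})$ and $\mathcal{U}_*'(FD_{2p^m})$ collapse to the common value $\mathcal{B}'(FD_{2p^m})$. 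This establishes the first assertion $\mathcal{U}'(FD_{2p^m}) = \mathcal{U}_*'(FD_{2p^m})$.

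For the second assertion I would observe that the same computation gives $\mathcal{U}'(FD_{2p^m}) = \mathcal{B}'(FD_{2p^m})$ in particular, so $\mathcal{U}'(FD_{2p^m})$ is contained in $\mathcal{B}(FD_{2p^m})$. Since the derived subgroup of any group is normal (indeed characteristic) in that group, $\mathcal{B}'(FD_{2p^m})$ is normal in $\mathcal{B}(FD_{2p^m})$, whence the claim follows.

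I do not anticipate any genuine obstacle: once the direct-product structure of $\mathcal{U}(FD_{2p^m})$ and $\mathcal{U}_*(FD_{2p^m})$ is in hand, the corollary is a formal consequence of the commutator-of-a-direct-product identity together with the normality of the derived subgroup. The only point deserving a word of care is confirming that each of $W$, $\langle x \rangle$, and $1+F\widehat{D_{2p^m}}$ is abelian—the first two are central in $FD_{2p^m}$ and the last is identified as an elementary abelian $2$-group in the previous theorem—so that their derived subgroups vanish and the two commutators really do reduce to $\mathcal{B}'(FD_{2p^m})$.
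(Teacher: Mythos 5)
Your proposal is correct and follows essentially the same route as the paper: both arguments read off the commutator subgroup from the direct-product decomposition, noting that $W$, $\langle x \rangle$, and $1+F\widehat{D_{2p^m}}$ are central/abelian and hence contribute nothing. Your identification of the common value as $\mathcal{B}'(FD_{2p^m})$ together with the normality of a derived subgroup in its own group is a slightly more explicit way of closing the second assertion than the paper's chain $\mathcal{U}_*'(FD_{2p^m}) \leq \mathcal{B}(FD_{2p^m}) \leq \mathcal{U}_*(FD_{2p^m})$, but it is the same idea.
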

 \begin{proof}
Since $\mathcal{U}(FD_{2p^m})= W \times \langle x \rangle \times
\mathcal{U}_*(FD_{2p^m})$
such that $ W \times \langle x \rangle$
is in the center of $FD_{2p^m}$, it follows that
$\mathcal{U}'(FD_{2p^m}) =
\mathcal{U}_*'(FD_{2p^m})$. 
Further, because $\mathcal{U}_*(FD_{2p^m}) = \mathcal{B}(FD_{2p^m})
\times
(1+F \widehat{D_{2p^m}})$
  and $(1+F \widehat{D_{2p^m}})$ lies in the center of $FD_{2p^m}$, it implies
that 
$\mathcal{U}_*'(FD_{2p^m}) \leq \mathcal{B}(FD_{2p^m}) \leq
 \mathcal{U}_*(FD_{2p^m}) $ providing us with the result.
 \end{proof}

 \bibliographystyle{plain}
 \bibliography{references}

\end{document}